\newtheorem{theorem}{Theorem}[section]
\newtheorem{lemma}[theorem]{Lemma}
\newtheorem{prop}[theorem]{Proposition}
\theoremstyle{definition}
\newtheorem{defn}[theorem]{Definition}
\newtheorem{example}[theorem]{Example}
\title{Meromorphic Convexity on Stein manifolds}
\author{Blake J. Boudreaux}
\address[Blake J. Boudreaux]{The University of Western Ontario}
\email{bboudre7@uwo.ca}
\author{Rasul Shafikov}
\address[Rasul Shafikov]{The University of Western Ontario}
\email{shafikov@uwo.ca; \rm{ This author is is partially supported by NSERC.}}
\date{}
\begin{document}
\maketitle
\begin{abstract}
We consider generalizations of rational convexity to Stein manifolds and prove related results.
\end{abstract}

Given a compact set $K$ in $\mathbb C^n$, its rationally convex hull is defined as
\begin{equation}\label{e.rat}
\mathcal{R}\text{-hull }K= \{ z\in \mathbb C^n: |R(z)| \le \|R\|_K, \text{ for all rational functions } R \text{ with poles off } K\}.
\end{equation}
A compact $K$ is called {\it rationally convex} if $K =\mathcal{R}\text{-hull }K$. This is a standard definition of convexity with respect to a family of functions $\mathcal F$, when $\mathcal F$ is chosen to be the family of rational functions. $K$ is rationally convex if and only if for any point $p \in \mathbb C^n \setminus K$ there exists a complex algebraic hypersurface that passes through $p$ but avoids $K$.

In this paper we prove new results concerning generalizations of rational convexity to Stein manifolds. In this setting the defining family
$\mathcal F$  can be chosen to be meromorphic or strongly meromorphic functions. As it turns out, this corresponds to convexity with respect to complex hypersurfaces or principal hypersurfaces.
This distinction leads to different notions of convexity, which we call {\it meromorphic} and {\it strong meromorphic} convexity. In Theorem~\ref{t.approx} we give general approximation results corresponding to (strong) meromorphic convexity.

While determining whether a given compact is rationally or meromorphically convex may be a difficult problem, there exists a characterization by Duval--Sibony~\cite{DuSi} of rational convexity of a special, but important class of compacts, namely totally real manifolds. This characterization establishes a strong connection between rational convexity and K\"ahler geometry. In Section~\ref{s.duvsib} we give a generalization of this result for strong meromorphic convexity. In Section~\ref{s.strong} we give a sufficient and necessary conditions for a meromorphically convex compact and totally real manifolds to be strongly meromorphically convex. Section 4 generalizes this circle of ideas to subsemigroups $G$ of the Picard group $\text{Pic}(X)$ by showing that a holomorphic function defined on a neighbourhood of a $G$-meromorphically convex compact (see Definition~\ref{G-cvx}) can be approximated uniformly on $K$ by quotients of the ``strong'' form $s_1/s_2$, where $s_1,s_2\in\Gamma(X,L)$ for some $L\in G$.

\section{Meromorphic convexity}\label{s.merc}

It is easy to see  that given a compact $K\subset \mathbb C^n$ its rationally convex hull $\mathcal{R}\text{-hull }K$, as defined
by~\eqref{e.rat}, consists of points $z\in \mathbb C^n$ with the property that if $f(z)=0$ for some polynomial $f$, then $f$ vanishes
somewhere on $K$. This means that
the complement of $\mathcal{R}\text{-hull }K$ is a union of algebraic hypersurfaces. Note that any complex hypersurface in $\mathbb C^n$ is principal, i.e., is the zero locus of a single entire function (holomorphic on $\mathbb C^n$). Any holomorphic function on a rationally convex $K$ can be approximated uniformly on $K$ by rational functions, according to the classical Oka--Weil theorem.

Let now $X$ be a Stein manifold. A natural generalization of rational convexity to $X$
is to replace the family of rational functions with meromorphic functions on $X$ or, as considered by many authors, to replace complex algebraic hypersurfaces simply with
complex hypersurfaces. Let us start with meromorphic functions. A general meromorphic function can be defined as follows.
Let $\mathcal M_p$ be the quotient field of $\mathcal O_p$, i.e., $\mathcal M_p$ is the field of germs of meromorphic functions
at a point $p$ of a complex manifold $X$.
A meromorphic function $m$ on $X$ is a map
\[
m: X \to \cup_{p\in X} \mathcal M_p, \ p \to \mathcal M_p,
\]
such that $m_p \in \mathcal M_p$ for all $p \in X$, and for every $p\in X$ there exists a connected neighbourhood  $U\subset X$ and holomorphic functions $f,g \in \mathcal O(U)$, $g\not\equiv 0$, such that $m_z=f_z/g_z$ for all $z\in U$. The quotient of two entire functions is clearly a meromorphic function, and on Stein manifolds one may construct global meromorphic functions from compatible local data by solving an additive Cousin problem. A point $p \in X$ is called a point of indeterminacy of a meromorphic function $m$ if $m_p = f_p/g_p$, the germs $f_p$ and $g_p$ are coprime, and $f(p) = g(p)=0$. The set of all indeterminacy points is called the indeterminacy locus of the meromorphic function $m$ and will be denoted by ${\mathcal I}(m)$.

The Poincar\'e problem asks whether every global meromorphic function on $X$ is the quotient of entire functions on $X$. The \textit{strong} Poincar\'e problem requires that the entire functions of the quotient satisfy an additional property: their germs are relatively prime at every point of $X$. We will call the latter type of meromorphic function a \textit{strong meromorphic function}.
We denote by  $\mathcal{M}(X)$ and $\mathcal{SM}(X)$ the spaces of meromorphic and strong meromorphic functions on $X$, respectively.

On a Stein manifold $X$ the solution to the (weak) Poincar\'e problem  is an immediate consequence of Cartan's Theorem A. The strong Poincar\'e problem is however solvable precisely when the topological condition $H^2(X,\mathbb{Z})=0$ is satisfied. Since $H^2(X,\mathbb{Z})=0$ implies the universal solvability of the multiplicative Cousin problem on Stein manifolds (they are in fact equivalent), its sufficiency for the solvability of the strong Poincar\'e problem is clear. On the other hand, its necessity cannot be immediately be concluded from classical results, and was shown by Ephraim~\cite{Ep} to be a consequence of Cartan's Theorem B. For a thorough treatment of this subject, see Fritzsche--Grauert~\cite{FrGr}.

Consider also the situation where every (weak) meromorphic function on a Stein manifold $X$ has zero divisor corresponding to a torsion element of $\text{Pic}(X)\cong H^2(X,\mathbb{Z})$. Then for every $m\in\mathcal{M}(X)$ with zero divisor $D$ there is a positive integer $k$ and $h\in\mathcal{O}(X)$ for which $\text{div}(h)=kD$, so $h/m^k\in\mathcal{O}(X)$ and hence one sees that $m^k\in\mathcal{SM}(X)$ via the representation $m^k=h/(h/m^k)$. Therefore, in such a situation the hulls defined below will coincide despite $H^2(X,\mathbb{Z})\neq 0$ in general.

\begin{defn}\label{d.mercon}
Let $X$ be a Stein manifold and $K$ be a compact subset of $X$. Define
\begin{align*}
	\mathcal{M}\text{-hull}(K)&=\left\{p\in X\,:\,\left|m(p)\right|\leq \left\|m\right\|_K\text{ for all }m\in\
	\mathcal{M}(X)\text{ with }{\mathcal I}(m)\cap (K\cup\{p\})=\varnothing\right\} ,\\
	\mathcal{SM}\text{-hull}(K)&=\left\{p\in X\,:\,\left|m(p)\right|\leq \left\|m\right\|_K\text{ for all }m\in
	\mathcal{SM}(X)\text{ with }{\mathcal I}(m)\cap (K\cup\{p\})=\varnothing\right\} .
\end{align*}
We call $K$ {\it meromorphically convex} (resp. {\it strongly meromorphically convex}) if $\mathcal{M}\text{-hull}(K) = K$ (resp.
$\mathcal{SM}\text{-hull}(K) = K$).
\end{defn}
For brevity we also write $\mathcal{M}$-convex (resp. $\mathcal{SM}$-convex) for meromorphic (resp. strong meromorphic) convexity. Clearly,
\begin{equation}\label{e.minsm}
 K \subset  \mathcal{M}\text{-hull}(K) \subset \mathcal{SM}\text{-hull}(K) .
\end{equation}
 We will see that for a compact $K$ on a Stein manifold $X$, the sets $\mathcal{M}\text{-hull}(K)$ and $\mathcal{SM}\text{-hull}(K)$ are also compact. It follows then from the definition that $\mathcal{M}\text{-hull}(K)=\mathcal{M}\text{-hull}(\mathcal{M}\text{-hull}(K))$, and similarly for the strong meromorphically convex hull. If $X= \mathbb C^n$, then both definitions agree with rational convexity because entire functions on $\mathbb C^n$ can be approximated by polynomials uniformly on compact sets.

As in the case of rational convexity, $\mathcal{M}$- and $\mathcal{SM}$-convexity can be also formulated in terms of complex hypersurfaces. What differentiates the two definitions is whether one requires the hypersurfaces to be principal. More precisely,  let $K$ be a compact subset of $X$ and let us define the following hulls:
\begin{align*}
	H(K)&=\left\{x\in X : f^{-1}(0)\cap K\neq\varnothing\text{ for every }f\in\mathcal{O}(X)\text{ satisfying }f(x)=0\right\},\\
	h(K)&=\left\{x\in X : \text{every complex hypersurface in } X \text{ passing through }x\text{ intersects }K\right\}.
\end{align*}
Clearly $h(K)\subseteq H(K)$, and it was shown by Col\c{t}oiu~\cite{Co} that these hulls coincide for all $K \subset X$ if and only if $\text{Hom}(H_2(X,\mathbb{Z}),\mathbb{Z})=0$, a slightly weaker condition than $H^2(X,\mathbb{Z})=0$. The next result shows that the hulls with respect to the families of meromorphic functions and hypersurfaces coincide.

\begin{prop}\label{p.smero}
Let $X$ be a Stein manifold and $K$ be a compact subset of $X$. Then
\begin{equation}
	h(K) =\mathcal{M}\text{-hull}(K), \ \ H(K) =\mathcal{SM}\text{-hull}(K).
\end{equation}
\end{prop}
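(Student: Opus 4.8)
The plan is to prove both identities by comparing complements through a single dictionary: a (strong) meromorphic function that violates the hull inequality at a point manufactures a (principal) hypersurface through that point missing $K$, and conversely. I would dispose of the principal case $H(K)=\mathcal{SM}\text{-hull}(K)$ first, since there every relevant object is a global holomorphic function, and then upgrade the argument to $h(K)=\mathcal{M}\text{-hull}(K)$ using line bundles. Throughout I use the observation that any $m$ violating its hull inequality must satisfy $\|m\|_K<\infty$, which forces $m$ to have no poles on $K$.

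For $\mathcal{SM}\text{-hull}(K)\subseteq H(K)$ I argue contrapositively: if $x\notin H(K)$, choose $f\in\mathcal{O}(X)$ with $f(x)=0$ and $f^{-1}(0)\cap K=\varnothing$, and set $m=1/f$. Since the germ of $1$ is a unit it is coprime to $f$ everywhere, so $m\in\mathcal{SM}(X)$ and ${\mathcal I}(m)=\varnothing$; moreover $m$ is bounded on $K$ (as $f$ is zero-free there) while $m(x)=\infty$, so $|m(x)|>\|m\|_K$ and $x\notin\mathcal{SM}\text{-hull}(K)$. Conversely, if $x\notin\mathcal{SM}\text{-hull}(K)$ is witnessed by $m=s_1/s_2$ with $s_1,s_2\in\mathcal{O}(X)$, ${\mathcal I}(m)\cap(K\cup\{x\})=\varnothing$, and $|m(x)|>\|m\|_K$, then $s_2$ is zero-free on $K$. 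If $m(x)=\infty$ then $f=s_2$ already passes through $x$ and misses $K$; if $m(x)$ is finite, then $f=s_1-m(x)\,s_2\in\mathcal{O}(X)$ vanishes at $x$, and a zero $p\in K$ of $f$ would give $m(p)=m(x)$, contradicting $|m(x)|>\|m\|_K$. Either way $x\notin H(K)$.

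The identity $h(K)=\mathcal{M}\text{-hull}(K)$ follows the same template. For $h(K)\subseteq\mathcal{M}\text{-hull}(K)$ I again pass to complements: a meromorphic $m$ violating the inequality at $x$, having no poles on $K$, yields a hypersurface through $x$ avoiding $K$, namely the polar set of $m$ when $m(x)=\infty$, or the zero set of $m-m(x)$ when $m(x)$ is finite; a point of $K$ lying on this hypersurface would be a pole of $m$ or a point where $m$ attains $m(x)$, both impossible. The content of the proposition is the reverse inclusion $\mathcal{M}\text{-hull}(K)\subseteq h(K)$, where from an arbitrary, possibly non-principal, hypersurface one must produce a genuine meromorphic function.

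The main obstacle is precisely this last step: given a complex hypersurface $V$ with $x\in V$ and $V\cap K=\varnothing$, I must build $m\in\mathcal{M}(X)$ with a pole at $x$, no poles on $K$, and ${\mathcal I}(m)\cap(K\cup\{x\})=\varnothing$. The plan is to pass to the associated line bundle $L=\mathcal{O}(V)\in\text{Pic}(X)$ with its canonical section $s_2\in\Gamma(X,L)$ whose zero divisor is $V$, so that $\{s_2=0\}=V$ is disjoint from $K$; since $X$ is Stein, Cartan's Theorem A provides a section $s_1\in\Gamma(X,L)$ with $s_1(x)\neq 0$. Then $m=s_1/s_2$ is a bona fide meromorphic function, as the transition factors of $L$ cancel in the quotient, with ${\mathcal I}(m)\subseteq\{s_1=0\}\cap V$ disjoint from $K\cup\{x\}$, no poles on $K$, and a pole at $x$, whence $|m(x)|=\infty>\|m\|_K$ and $x\notin\mathcal{M}\text{-hull}(K)$. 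This is where Steinness is genuinely used, through global generation of $L$; in the principal case $L$ is trivial and the choice $s_1=1$ recovers the elementary construction of the previous paragraph.
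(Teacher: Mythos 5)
Your proposal is correct. For the identity $H(K)=\mathcal{SM}\text{-hull}(K)$, and for the easy half of $h(K)=\mathcal{M}\text{-hull}(K)$ (passing from a hull-violating meromorphic function to a hypersurface via its polar set or the zero set of $m-m(x)$), your argument coincides with the paper's. The genuine difference is in the hard half, $x\notin h(K)\Rightarrow x\notin\mathcal{M}\text{-hull}(K)$. The paper builds the witnessing function from Lemma~\ref{l.ephraim} (Ephraim's lemma): applying Nakayama's lemma and Cartan's Theorem B to an ideal sheaf, it produces an entire $f$ whose zero set is $Z\cup E$ and which cuts out exactly $Z$ near the point in question, then applies the lemma a second time to get an entire $g$ vanishing on the spurious components $E$ but not along $Z$, and the witness is $g/f$. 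You instead pass to the line bundle $L=\mathcal{O}(V)$ with its canonical section $s_2$, use Cartan's Theorem A to produce $s_1\in\Gamma(X,L)$ with $s_1(x)\neq 0$, and take $s_1/s_2$. This is cleaner: the canonical section vanishes exactly on $V$, so there are no extraneous components to cancel, no multiplicity bookkeeping, and no need for the paper's ad hoc modification of Ephraim's construction near $x$; the only inputs are that a hypersurface is locally principal, so that $\mathcal{O}(V)$ and its canonical section exist (a fact the paper itself invokes in Example~\ref{e.app}), and that global sections of a coherent sheaf on a Stein manifold generate every stalk. Your route also dovetails with the paper's later viewpoint, in which $h(K)$ is exactly the hull with respect to $\text{Pic}(X)$ in the sense of Definition~\ref{G-cvx} and quotients of coprime sections of a single line bundle play the role of strong meromorphic functions (Theorem~\ref{t.BSa}). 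What the paper's route buys is that the witness is exhibited as a quotient of two global holomorphic functions, and that Lemma~\ref{l.ephraim}, with its control on orders of vanishing, is set up once and reused conceptually in the discussion of the strong Poincar\'e problem; for the proposition itself that extra precision is not needed, and your argument is the shorter one, relying on Theorem A where the paper relies on Theorem B.
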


In view of the proposition meromorphic convexity can also be called {\it convexity with respect to hypersurfaces}.

\begin{proof}
Throughout the proof it will be convenient to view a meromorphic function $m$ on $X$  as a holomorphic map
from $X\setminus {\mathcal I}(m)$ into~$\mathbb{CP}^1$.

Suppose that $p\not\in H(K)$. Then there exists a $f\in\mathcal{O}(X)$ such that $f(p)=0$ and whose zero locus avoids
$K$. It follows that $1/f\in\mathcal{SM}(X)$ with $\infty = 1/|f(p)|>\|1/u\|_K$. This shows that
$(X\setminus H(K)) \subset (X \setminus \mathcal{SM}\text{-hull}(K) )$.

For the opposite inclusion, suppose $p\in X \setminus \mathcal{SM}\text{-hull}(K)$, so there exists a strong meromorphic
function $f/g$ with ${\mathcal I}(f/g)\cap (K\cup\{p\})=\varnothing$ satisfying $|f(p)/g(p)|>\|f/g\|_K$.
Set $z:=f(p)/g(p)$.  If $z=\infty$, then since $f$ and $g$ are coprime, $g$ is the entire function whose zero locus passes
through $p$ but avoids $K$, and so $p \in X\setminus H(K)$. If $z\ne\infty$, then the holomorphic function
$f-z\cdot g$ vanishes at $p$. If $f(q)-z\cdot g(q)=0$ for some $q\in K$, then either $f(q)=g(q)=0$ or $f(q)/g(q)=z$ must hold.
Since $f$ and $g$ are relatively prime at $q$, the former case implies that $q\in {\mathcal I}(f/g)$, which is not possible.
The latter case contradicts the inequality $|f(p)/g(p)|>\|f/g\|_K$. Therefore, such a $q\in K$ does not exist and
hence $p\not\in H(K)$. This completes the proof of the first equality of the proposition.

\bigskip

For the proof of the second identity we use a lemma by Ephraim~\cite{Ep} below, along with a small modification of the proof, so for convenience we provide these below.

\begin{lemma}\label{l.ephraim}
Let $X$ be a Stein manifold, $\{H_j\}_{j \in J}$ be a locally finite family of irreducible complex hypersurfaces in $X$,
and $\nu_j \ge 0$ be integers. Then there exists an entire function $f$ that vanishes on $H_j$ to order $\nu_j$ for $j\in J$.
\end{lemma}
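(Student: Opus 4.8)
My plan is to realize the prescribed vanishing as a \emph{general} global section of a single line bundle, namely the ideal sheaf of the divisor $D = \sum_{j} \nu_j H_j$, and then to use Cartan's Theorem A together with a Baire-category argument to fix the orders exactly. I first note that each irreducible hypersurface $H_j$ is locally principal: since the local rings $\mathcal{O}_p$ of a complex manifold are unique factorization domains, near any $p$ the germ of $H_j$ is cut out by a single irreducible $h_{j,p}\in\mathcal{O}_p$. As the family $\{H_j\}$ is locally finite, the product $\prod_j h_{j,p}^{\nu_j}$ is a finite product near $p$ and defines a local generator of the sheaf $\mathcal{J}:=\mathcal{O}(-D)$ of germs vanishing to order at least $\nu_j$ along each $H_j$. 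Thus $\mathcal{J}$ is a coherent analytic sheaf, locally free of rank one, whose global sections are exactly the $f\in\mathcal{O}(X)$ with $\mathrm{ord}_{H_j}(f)\ge\nu_j$ for all $j$. It is worth emphasizing here that demanding $\mathrm{div}(f)=D$ \emph{exactly} would be the second Cousin problem, obstructed by $c_1(\mathcal{O}(D))\in H^2(X,\mathbb{Z})$; the lemma, however, only prescribes the orders along the $H_j$ and tacitly permits $f$ to vanish elsewhere, which is what lets us avoid that obstruction.

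Next I would invoke Cartan's Theorem A: on the Stein manifold $X$ the sheaf $\mathcal{J}$ is generated by its global sections. The point is properness. Fix $j$ and choose a smooth point $p$ of $H_j$ lying on no other $H_k$ (such points exist since $H_j$ is irreducible and the family is locally finite). There $\mathcal{J}_p=h^{\nu_j}\mathcal{O}_p$ for a local equation $h$ of $H_j$, so a global section generating $\mathcal{J}_p$ has germ $u\,h^{\nu_j}$ with $u$ a unit, that is, order exactly $\nu_j$ along $H_j$. Consequently the subspace $V_j:=\Gamma(X,\mathcal{O}(-D-H_j))$ of sections vanishing to order at least $\nu_j+1$ along $H_j$ is a \emph{proper} subspace of $\Gamma(X,\mathcal{J})$. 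Each $V_j$ is moreover closed, being the kernel of the continuous linear map sending $f$ to its leading ($\nu_j$-th order) coefficient along $H_j$.

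The desired $f$ is then any global section of $\mathcal{J}$ lying outside $\bigcup_j V_j$: such an $f$ satisfies $\mathrm{ord}_{H_j}(f)\ge\nu_j$ for all $j$ and $\mathrm{ord}_{H_j}(f)=\nu_j$ for every $j$, which is precisely the assertion, extra zeros off $\bigcup_j H_j$ being harmless. To produce it I would use that $\Gamma(X,\mathcal{J})$ is a Fréchet space (a closed subspace of $\mathcal{O}(X)$ in the topology of uniform convergence on compacts), that a locally finite family of hypersurfaces on the second countable manifold $X$ is countable, and that a proper closed subspace of a Fréchet space is nowhere dense; by the Baire category theorem the complement of the countable union $\bigcup_j V_j$ is nonempty.

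I expect the main obstacle to be exactly this last simultaneous general-position step: matching the order $\nu_j$ on all the $H_j$ at once when $J$ is infinite. Over a finite family it is immediate, since a $\mathbb{C}$-vector space is never a finite union of proper subspaces, but for a locally finite infinite family one must pass to the Baire-category formulation and verify that each $V_j$ is genuinely closed and proper. The latter is precisely where Theorem A (global generation of $\mathcal{J}$) enters; everything else is the standard dictionary between effective divisors, their ideal sheaves, and orders of vanishing along irreducible components.
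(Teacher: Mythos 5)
Your proof is correct, but it follows a genuinely different route from the paper's. Both arguments start from the same coherent sheaf $\mathcal{J}=\mathcal{O}(-D)$ of germs vanishing to order at least $\nu_j$ along each $H_j$, and both correctly note that permitting extra zeros of $f$ off $\bigcup_j H_j$ is what sidesteps the Cousin II obstruction. From there the paper argues by lifting rather than by genericity: it marks one point $p_j$ on each $H_j$, lets $\mathfrak{n}$ be the ideal sheaf of the discrete set $P=\{p_j\}$, shows via Nakayama that $\mathcal{J}/\mathfrak{n}\cdot\mathcal{J}$ is nonzero exactly on $P$, picks a section $s$ of this quotient with $s_{p_j}\neq 0$ for all $j$, and uses Cartan's Theorem B (surjectivity of $\Gamma(X,\mathcal{J})\to\Gamma(X,\mathcal{J}/\mathfrak{n}\cdot\mathcal{J})$) to lift $s$ to the desired $f$; the condition $f_{p_j}\notin(\mathfrak{n}\cdot\mathcal{J})_{p_j}$ then forces $\mathrm{ord}_{H_j}f\leq\nu_j$. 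You instead use Theorem A to make each $V_j$ a proper closed subspace of the Fr\'echet space $\Gamma(X,\mathcal{J})$ and conclude by Baire category, using that a locally finite family is countable. Both proofs are sound; what the paper's route buys is a purely sheaf-theoretic argument with no functional analysis and, more importantly for this paper, an $f$ whose behaviour is pinned down at prescribed points---this flexibility is exploited in the proof of Proposition~\ref{p.smero}, where the given point $q$ is taken as one of the marked points of $P$ so that $f^{-1}(0)$ agrees with $Z$ near $q$. What your route buys is the stronger conclusion that a generic (residual) section of $\mathcal{O}(-D)$ has the exact orders $\nu_j$. One small repair: $V_j$ is the kernel of the leading-coefficient map only if that map is read as valued in holomorphic sections along $H_j$; evaluating $f/h_j^{\nu_j}$ at a single point gives a strictly larger kernel. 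Alternatively, replace $V_j$ by the closed proper hyperplane $\{f\in\Gamma(X,\mathcal{J}) : (f/h_j^{\nu_j})(p_j)=0\}$ for one fixed smooth point $p_j\in H_j$ off the other hypersurfaces (proper by your Theorem A argument); these contain the $V_j$, avoiding them suffices, and doing so brings your argument closest to the paper's point-marking device.
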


\begin{proof}
Let $\mathcal J$ be the sheaf of germs of holomorphic functions vanishing on each $H_j$ to order at least $\nu_j$. For each $j\in J$,  choose a point $p_j \in H_j$. Then $P=\{p_j, j\in J\}$ is a complex analytic subset of $X$ of dimension 0. Let $\mathfrak{n}$ be the sheaf of germs of holomorphic functions vanishing on $P$---a coherent sheaf of ideals.

If $p \notin P$, then $\mathcal J_p = (\mathfrak{n} \cdot\mathcal J)_p$. On the other hand, by Nakayama's lemma,
for $p\in P$ we have $\mathcal J_p \ne (\mathfrak{n}\cdot \mathcal J)_p$. It follows that $(\mathcal J/\mathfrak{n} \cdot \mathcal J)_p \ne 0$ iff $p\in P$. Since $P$ is discrete, we may find a section $s \in \Gamma(X, \mathcal J/\mathfrak{n} \cdot \mathcal J)$ for which $s_p \ne 0$ for any $p \in P$.

By Cartan's Theorem B there exists a section $f\in \Gamma(X,\mathcal J)$ whose image in $\Gamma (X, \mathcal J/\mathfrak{n} \cdot \mathcal J)$ is precisely $s$. This means that $f$ is a holomorphic function which vanishes to order at least $\nu_j$ on $H_j$ for all $j \in J$. But since $s_p \ne 0$ for all $p\in P$, it follows that
$f_p \notin (\mathfrak{n} \cdot \mathcal J)_p$ for all $p \in P$. Taking $p=p_j$ we see that $f$ vanishes to order at most $\nu_j$ on $H_j$.
\end{proof}

\noindent Note that the function $f$ constructed in the lemma may vanish somewhere outside
$\cup_j H_j$.

\bigskip

Returning to the proof of the proposition, suppose that $q\not\in h(K)$. Then there exists a hypersurface $Z$ passing through $q$ but avoiding $K$. Without loss of generality, we can assume that $Z$ is irreducible. By Lemma~\ref{l.ephraim}, there exists an $f\in\mathcal{O}(X)$ whose zero locus contains $Z$. Further, we can choose $q$ as one of the members of the discrete set $P$ in the proof of the lemma to ensure that there is a neighbourhood $U$ of $q$ for which $U\cap f^{-1}(0)= U\cap Z$.  Write $f^{-1}(0)=Z\cup E$, where $E$ is a hypersurface in $X$ that necessarily avoids $U$.
	We apply Lemma~\ref{l.ephraim} again to find another $g\in\mathcal{O}(X)$ such that $E\subset g^{-1}(0)$ and $g$ has multiplicity zero along $Z$. By a similar modification of the proof of the lemma we can ensure that $g\neq 0$ near $q$. Therefore $f/g\in\mathcal{M}(X)$, $q\not\in {\mathcal I}(f/g)$, and the zero divisor of $f/g$ and $Z$ agree (as sets). It follows that $\infty = |g(q)/f(q)|>\|g/f\|_K$ since $f/g$ has no zeroes on $K$. This shows that $(X\setminus h(K))\subset (X\setminus\mathcal{M}\text{-hull}(K))$.

The reverse inclusion is a modification of a standard argument. Suppose $q\in X$ is point for which there exists a function  $f/g\in\mathcal{M}(X)$ with ${\mathcal I}(f/g)\cap (K\cup\{q\})=\varnothing$ and satisfying
$|f(q)/g(q)|>\|f/g\|_K$. Set $z:=f(q)/g(q)$. If $z=\infty$, then the zero divisor of $g/f$ is the desired hypersurface. If $z \ne \infty$, then
\[
	\left\|\frac{f}{g}-z\right\|_K\geq |z|-\left\|\frac{f}{g}\right\|_K>|z|-\left|\frac{f(q)}{g(q)}\right|=0.
\]
Since this inequality is strict, the zero divisor of the meromorphic function $f/g-z$ is the required hypersurface.
\end{proof}

\begin{prop}\label{p.rat}
Let  $X$ be a Stein manifold and $\Phi : X \to \mathbb C^N$ be a proper holomorphic embedding.
A compact $K\subset X$ is $\mathcal{SM}$-convex if and only if $\Phi(K)$ is rationally convex in $\mathbb C^N$.
\end{prop}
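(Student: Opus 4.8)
The plan is to route everything through the identification $\mathcal{SM}\text{-hull}(K)=H(K)$ supplied by Proposition~\ref{p.smero} and to reduce the whole statement to the single set-theoretic identity
\[
\mathcal{R}\text{-hull}(\Phi(K)) = \Phi\bigl(H(K)\bigr),
\]
where $V:=\Phi(X)$ is a closed complex submanifold of $\mathbb{C}^N$ (closed precisely because $\Phi$ is proper). Granting this identity, the injectivity of $\Phi$ at once gives $H(K)=K \iff \Phi(H(K))=\Phi(K) \iff \mathcal{R}\text{-hull}(\Phi(K))=\Phi(K)$, which is exactly the claimed equivalence. Two standard tools will be used throughout: first, since $\mathbb{C}^N$ is Stein and $V$ is a closed submanifold, Cartan's Theorem~B makes the restriction $\mathcal{O}(\mathbb{C}^N)\to\mathcal{O}(V)$ surjective, so every $f\in\mathcal{O}(X)$ extends to an entire function $F$ on $\mathbb{C}^N$ with $F\circ\Phi=f$; second, every entire function on $\mathbb{C}^N$ is a locally uniform limit of its Taylor partial sums, so polynomials are dense in $\mathcal{O}(\mathbb{C}^N)$ in the compact-open topology.

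The inclusion $\Phi(H(K))\subseteq\mathcal{R}\text{-hull}(\Phi(K))$ is the elementary direction. If $x\in H(K)$ and $P$ is any polynomial with $P(\Phi(x))=0$, then $f:=P\circ\Phi\in\mathcal{O}(X)$ vanishes at $x$, hence vanishes at some $y\in K$ by the very definition of $H(K)$; thus $P(\Phi(y))=0$ with $\Phi(y)\in\Phi(K)$. Every polynomial vanishing at $\Phi(x)$ therefore meets $\Phi(K)$, which is the characterization of $\Phi(x)\in\mathcal{R}\text{-hull}(\Phi(K))$.

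The reverse inclusion $\mathcal{R}\text{-hull}(\Phi(K))\subseteq\Phi(H(K))$ is the heart of the matter: for each $w\notin\Phi(H(K))$ I must produce an honest polynomial through $w$ that avoids $\Phi(K)$. I would first build an entire function on $\mathbb{C}^N$ separating $w$ from the compact set $\Phi(K)$ and then approximate it by a polynomial. If $w=\Phi(x)$ with $x\notin H(K)$, choose $f\in\mathcal{O}(X)$ with $f(x)=0$ and $f^{-1}(0)\cap K=\varnothing$, and let $F$ be its entire extension; then $F(w)=0$ while $|F|\geq\delta>0$ on $\Phi(K)$. If instead $w\notin V$, use that $V$ is a closed analytic subset to obtain (again via Theorem~B) an entire $H$ vanishing on $V$ with $H(w)=c\neq 0$, and put $F:=H-c$, so that $F(w)=0$ and $F\equiv -c$ on $V\supseteq\Phi(K)$. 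In either case $F$ is entire with $F(w)=0$ and $|F|\geq\delta$ on $\Phi(K)$; approximating $F$ by a polynomial $Q$ to within $\delta/2$ on the compact set $\Phi(K)\cup\{w\}$ and setting $P:=Q-Q(w)$ yields a polynomial with $P(w)=0$ and $|P|>0$ on $\Phi(K)$, whence $w\notin\mathcal{R}\text{-hull}(\Phi(K))$.

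The step I expect to be most delicate is exactly this passage from an arbitrary entire function to an honest polynomial performing the same separation: it is what forces the simultaneous use of the Cartan extension across $V$ and the density of polynomials, and the small perturbation $P:=Q-Q(w)$ is needed to restore the exact vanishing at $w$ that approximation destroys. Properness of $\Phi$ enters twice and is indispensable: it ensures that $V=\Phi(X)$ is closed, so that both the extension theorem and the separation of points off $V$ apply, and that $\Phi$ is a homeomorphism onto $V$, so that $\Phi(K)$ is compact and $\Phi$ is injective, as needed in the final reduction.
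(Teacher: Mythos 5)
Your proof is correct and follows essentially the same route as the paper: both reduce to hypersurface convexity via Proposition~\ref{p.smero}, split points of $\mathbb{C}^N\setminus\Phi(K)$ into those on and off $\Phi(X)$, handle the on-variety case by Oka--Cartan extension of a separating $f\in\mathcal{O}(X)$ and the off-variety case via the ideal sheaf of $\Phi(X)$ (you invoke Theorem~B where the paper uses Theorem~A generators), and finish by Taylor-polynomial approximation. The only (harmless, and in fact useful) difference is that you package the argument as the stronger hull identity $\mathcal{R}\text{-hull}(\Phi(K))=\Phi\bigl(\mathcal{SM}\text{-hull}(K)\bigr)$, which is exactly what the paper implicitly relies on afterwards to deduce compactness of $\mathcal{SM}\text{-hull}(K)$.
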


\begin{proof}
Suppose $K$ is $\mathcal{SM}$-convex.
Applying Cartan's Theorem A to the sheaf of germs of holomorphic functions on $\mathbb{C}^N$ vanishing on the complex analytic set $\Phi(X)$ yields global generators $h_1,\ldots,h_M\in\mathcal{O}(\mathbb{C}^N)$ with the property that $\bigcap_{j=1}^{M}h^{-1}_j(0)=\Phi(X)$.
	Accordingly, fix $a\in\mathbb{C}^N\setminus\Phi(K)$. If $a\not\in\Phi(X)$, then $h_j(a)\neq 0$ for one of the generators $h_1,\ldots,h_M$, in which case $z\mapsto h_j(z)-h_j(a)$ is a holomorphic function on $\mathbb{C}^N$ which passes through $a$ but avoids $\Phi(K)$. Approximation by Taylor polynomials provides a polynomial with the same property. If $a\in \Phi(X)\setminus\Phi(K)$, then the strong meromorphic convexity of $S$ (and Proposition~\ref{p.smero}) yields a $f\in\mathcal{O}(X)$ with $f(\Phi^{-1}(a))=0$ but $f^{-1}(0)\cap K=\varnothing$. Applying the Oka--Cartan extension theorem to $f\circ\Phi^{-1}\in\mathcal{O}(\Phi(X))$ provides an entire function $F$ with the property that $F(a)=0$ but $F^{-1}(0)\cap\Phi(K)=\varnothing$. This shows that $\Phi(K)$ is rationally convex.

Conversely, if $\Phi(K)$ is rationally convex, then for any $a\in \Phi(X) \setminus \Phi(K)$ there exists a holomorphic polynomial on
$\mathbb C^N$ that vanishes at $a$ but does not vanish on $\Phi(K)$. Its restriction to $\Phi(X)$ is an entire function that defines a principal hypersurface through $a$ that avoids $\Phi(K)$. It follows from Proposition~\ref{p.smero} that $K$ is $\mathcal{SM}$-convex.
\end{proof}

The rationally convex hull of any compact in $\mathbb C^N$ is compact, and so it follows from Proposition~\ref{p.rat} that $\mathcal{SM}\text{-hull}(K)$ is compact for any compact $K \subset X$. If $p\notin \mathcal{M}\text{-hull}(K)$ and $h$ is a hypersurface on $X$ passing through $p$ and avoiding $K$, then $h$ can be perturbed (see Ex.~\ref{e.app} for details) so that it passes through any point in a small neighbourhood of $p$ in $X$ still avoiding $K$. This shows that $\mathcal{M}\text{-hull}(K)$ is a closed set, and so in view of~\eqref{e.minsm}, it is compact. A different proof of this is given in~\cite{Hi}.

\section{Meromorphic approximation}\label{s.approx}

We now discuss approximation on (strongly) meromorphically convex compacts.
Hirschowitz~\cite[Thm~2]{Hi} already proved a version of the Oka--Weil theorem for compacts on a Stein manifold that are meromorphically convex: \textit{any holomorphic function defined on a neighbourhood of a compact subset $K$ of a Stein manifold with $h(K)=K$ is the uniform limit on $K$ of a sequence of meromorphic functions without poles on $K$}. It may appear that this
notion of convexity is the proper analogue of rational convexity to Stein manifolds. However, our next result shows that the desired notion of convexity depends on the class of meromorphic functions by which one wishes to approximate. More precisely, the following holds.

\begin{theorem}\label{t.approx}
	Let $X$ be a Stein manifold and $K$ be a strongly meromorphically convex subset of~$X$. For any
	$\varphi\in\mathcal{O}(K)$ and $\varepsilon>0$ there exist $f,g\in\mathcal{O}(X)$ that are pointwise relatively prime at each point of $X$ and satisfy $\|\varphi-f/g\|<\varepsilon$.
\end{theorem}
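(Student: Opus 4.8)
The plan is to transfer the problem to $\mathbb{C}^N$ via a proper embedding and to exploit that on $\mathbb{C}^N$ the second cohomology vanishes, so that coprimality of the approximant can be arranged. Fix a proper holomorphic embedding $\Phi\colon X\to\mathbb{C}^N$. By Proposition~\ref{p.rat} the hypothesis that $K$ is $\mathcal{SM}$-convex is equivalent to $\Phi(K)$ being rationally convex in $\mathbb{C}^N$, so the classical rational version of the Oka--Weil theorem becomes available. First I would produce an ambient holomorphic extension of $\varphi$. Writing $\psi=\varphi\circ\Phi^{-1}$, which is holomorphic on the open subset $\Phi(U)$ of the closed submanifold $M=\Phi(X)$ (here $U\supset K$ is a neighbourhood on which $\varphi$ is defined), I would compose with a holomorphic retraction $r\colon W\to M$ of a tubular neighbourhood $W$ of $M$ (Docquier--Grauert) to obtain $\Psi=\psi\circ r$, holomorphic on a full neighbourhood $V\subset\mathbb{C}^N$ of $\Phi(K)$ and agreeing with $\psi$ on $M$ near $\Phi(K)$, since $r|_M=\mathrm{id}_M$.

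Next, because $\Phi(K)$ is rationally convex and $\Psi$ is holomorphic on a neighbourhood of it, rational approximation yields polynomials $P,Q$, which may be taken coprime, with $Q$ nonvanishing on $\Phi(K)$ and $\|\Psi-P/Q\|_{\Phi(K)}<\varepsilon$. Pulling back, set $f=P\circ\Phi$ and $g=Q\circ\Phi$; these lie in $\mathcal{O}(X)$ and $g$ has no zeros on $K$. For $x\in K$ we have $(f/g)(x)=(P/Q)(\Phi(x))$ and $\varphi(x)=\psi(\Phi(x))=\Psi(\Phi(x))$, whence $|\varphi(x)-(f/g)(x)|\le\|\Psi-P/Q\|_{\Phi(K)}<\varepsilon$. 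Thus $f/g$ already approximates $\varphi$ on $K$ with no poles or indeterminacy on $K$.

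It remains to guarantee that $f$ and $g$ are pointwise relatively prime at every point of $X$, and this is the crux. The germs $f_p,g_p$ fail to be coprime exactly when their common zero set contains a hypersurface germ through $p$, so it suffices to arrange that $\Phi(X)\cap\{P=0\}\cap\{Q=0\}$ has codimension at least two in $M$. Since $P,Q$ are coprime, $\{P=0\}\cap\{Q=0\}$ already has codimension two in $\mathbb{C}^N$, but its intersection with the \emph{fixed} submanifold $M$ could be excessive. The main obstacle is therefore a general position statement, and I would resolve it by perturbing the coefficients of $Q$ (and, if needed, of $P$) by a small generic amount. Such a perturbation changes $P/Q$ negligibly on the compact set $\Phi(K)$, where $Q$ is bounded away from zero, so both the estimate and the nonvanishing of $g$ on $K$ persist; a Bertini-type argument then shows that for a generic perturbation no irreducible component of $M\cap\{P=0\}$ lies in $\{Q=0\}$, forcing the triple intersection down to codimension two in $M$. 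Equivalently, $f$ and $g$ share no common hypersurface factor and are relatively prime at every point of $X$.

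With $f,g$ coprime everywhere the proof concludes: $f/g\in\mathcal{SM}(X)$, its indeterminacy locus misses $K$ because $g\neq 0$ there, and $\|\varphi-f/g\|_K<\varepsilon$, as required. Everything except the genericity step is a transfer of classical rational approximation on $\mathbb{C}^N$ through the embedding, combined with the Docquier--Grauert extension. I expect the delicate point to be precisely the general position argument of the third paragraph. An alternative route is to observe that coprimality of the pulled-back functions is equivalent to the vanishing in $\text{Pic}(X)\cong H^2(X,\mathbb{Z})$ of the class of the common-factor divisor; but since any common factor forces $f$ and $g$ to vanish simultaneously along a hypersurface avoiding $K$, the direct general-position approach, which removes that divisor at the source, seems cleanest and sidesteps the need to solve a second Cousin problem with an uncontrolled unit.
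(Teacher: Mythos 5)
Your proposal is correct, but it takes a genuinely different route from the paper's. You reduce the theorem to the classical Euclidean case: Proposition~\ref{p.rat} converts the hypothesis into rational convexity of $\Phi(K)$, a Docquier--Grauert retraction supplies an ambient holomorphic extension of $\varphi$, the classical rational Oka--Weil theorem in $\mathbb{C}^N$ produces $P/Q$, and a general-position perturbation makes the pullbacks $f=P\circ\Phi$, $g=Q\circ\Phi$ pointwise coprime. The paper instead argues intrinsically on $X$, following Hirschowitz~\cite{Hi}: strong meromorphic convexity (via $H(K)=K$) yields finitely many principal hypersurfaces avoiding $K$ whose union $H=h^{-1}(0)$ satisfies $\widehat{K}_{X\setminus H}\subset U$; the Oka--Weil theorem on the Stein manifold $X\setminus H$ reduces matters to approximating elements of $\mathcal{O}(X\setminus H)$, which is done by embedding $X\setminus H$ into $\mathbb{C}^{N+1}\setminus\{z_{N+1}=0\}$ via $(\Phi,h)$ and truncating a Laurent series in the last variable, so the approximants appear in the explicit form $\big(a_{-m}(\Phi)+\ldots+a_m(\Phi)h^{2m}\big)/h^m$; coprimality is then achieved by perturbing the lowest coefficient $a_{-m}$. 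Your route is shorter and outsources the hull-theoretic work to the classical theorem, while the paper's keeps explicit control of the denominator (a power of a single $h$ whose zero set avoids $K$) and is precisely the template that generalizes to the line-bundle setting of Theorem~\ref{t.BSa}, where no Euclidean reduction is available. Two refinements to your crux step: first, $\Phi(X)\cap\{P=0\}$ is analytic, not algebraic, so it may have countably infinitely many irreducible components, and ``generic'' must be understood via a countable-avoidance (Baire) argument rather than a finite Bertini statement; the cleanest fix is to perturb only the constant term of $Q$, since for each irreducible component $C_i$ there is at most one constant $c$ with $Q+c\equiv 0$ on $C_i$, so all but countably many arbitrarily small $c$ work, and such a perturbation preserves both the estimate and the nonvanishing of the denominator on $\Phi(K)$. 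Second, the degenerate case $P|_{\Phi(X)}\equiv 0$ should be noted separately (then $\|\varphi\|_K<\varepsilon$ and a small nonzero constant over $1$ suffices). Neither point is a gap peculiar to your argument---the paper's own perturbation of $a_{-m}$ relies on the same kind of density claim over possibly countably many components.
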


\noindent Note the conclusion $\|\varphi-f/g\|_K<\varepsilon$ implies in particular that $f/g$ has no poles on $K$.

\begin{proof}[Proof of Theorem~\ref{t.approx}]
Our proof uses the methods of Hirschowitz~\cite{Hi}. Let $K$ be a meromorphically compact subset of $X$ and let $f$ be a
holomorphic function defined on a neighbourhood $U$ of~$K$. Define the compact set $L:=\widehat{K}_X\setminus U$,
where $\widehat{K}_X$ denotes the holomorphically convex hull of $K$ in $X$. If $H$ is a principal hypersurface in $X$, then $X \setminus H$ is Stein, and if $H$ avoids $K$, then $\widehat K_{X \setminus H}$ is compact in $X\setminus H$. Therefore, the set $L_H = L \cap \widehat K_{X \setminus H}$ is compact, and $\bigcap_{K \cap H = \varnothing} L_{H} =\varnothing$.
Since $L$ is compact, we can find finitely many principal hypersurfaces $H_1,\ldots,H_k$ avoiding~$K$ and
\[
	L\cap \widehat{K}_{X\setminus H_1}\cap\cdots\cap\widehat{K}_{X\setminus H_k}=\varnothing .
\]
Writing $H:=\bigcup_{j=1}^{k}H_j$ we see that $\widehat{K}_{X\setminus H}\subset U$, and hence by the classical Oka--Weil theorem~\cite[Theorem 18]{FoFoWo} we may approximate $f$ uniformly on $\widehat{K}_{X\setminus H}$ by members of $\mathcal{O}(X\setminus H)$.

It suffices now to approximate functions in $\mathcal{O}(X\setminus H)$ by strongly meromorphic functions.
To do this, choose a proper holomorphic embedding $\Phi:X\to\mathbb{C}^N$ for some large $N$, which exists,
since $X$ is Stein.  The hypersurface $H$ is principal, since the $H_j$ are, and so
$H=h^{-1}(0)$ for some $h\in\mathcal{O}(X)$. Then $\Psi:=(\Phi,h)$ embeds $X\setminus H$ into $\mathbb{C}^{N+1}\setminus \{z_{N+1}=0\}$. Let $f \in \mathcal{O}(X\setminus H)$. The Oka--Cartan theorem allows us to extend the function
$f\circ\Psi^{-1}: \Psi(X\setminus H) \to \mathbb C$ to a function
$F:\mathbb{C}^{N+1}\setminus\{z_{N+1}=0\} \to \mathbb C$, which in turn may be approximated uniformly on compacts by partial sums of its Laurent series expansion with respect to $z_{N+1}$,
\[
	F(z',z_{N+1})=\sum_{k\in\mathbb{Z}}a_k(z')z_{N+1}^{k},
\]
where the $a_k$ are entire functions of the first $N$ variables---in fact by taking an appropriate Taylor polynomial it can be assumed that the $a_k$ are polynomials. Taking a partial sum of the above series and precomposing with $\Psi$ yields normal approximation of $f$ by meromorphic functions of the form
	\begin{align*}
	\sum_{k=-m}^ma_k(\Phi)h^k&=\frac{a_{-m}(\Phi)}{h^m}+\frac{a_{-m+1}(\Phi)}{h^{m-1}}+\ldots+a_m(\Phi)h^m\\
	&=\frac{a_{-m}(\Phi)+a_{-m+1}(\Phi)h+\ldots+a_m(\Phi)h^{2m}}{h^m}.
	\end{align*}
	The meromorphic function above is strong if the polynomial $a_{-m}$ is not identically zero on any irreducible component of the complex-analytic set $\Phi(h)$. Since the set of polynomials in $\mathbb{C}^N$ satisfying this property is dense in the space of all polynomials, taking a small perturbation of $a_{-m}$ if necessary ensures that the approximating meromorphic functions are strong.
\end{proof}

A natural question is whether approximation by strong meromorphic functions is possible on compacts that are only meromorphically convex. The following example shows this is not possible in general.

\begin{example}\label{e.app}
Let $X$ be a Stein manifold for which there exists a compact $K$ satisfying
	\[\mathcal{M}\text{-hull}(K)\neq \mathcal{SM}\text{-hull}(K).\]
Such a manifold is known to exist due to the work of Col\c{t}oiu~\cite{Co}. Fix a point $p\in \mathcal{SM}\text{-hull}(K)\setminus \mathcal{M}\text{-hull}(K)$.
We claim that $\widetilde K:=\mathcal{M}\text{-hull}(K)\cup\{p\}$ is meromorphically convex. Indeed, let $q\not\in\widetilde K$.
Then there exists a hypersurface $Z$ which passes through $q$ but avoids $K$.
Suppose $Z$ passes through $p$. There is a small perturbation of $Z$ that passes through $q$ but avoids $\widetilde K$. Indeed,
let $L$ be the holomorphic line bundle corresponding to $Z$, then there exists a holomorphic section $s: X \to L$ whose zero
locus is precisely $Z$. It is well-known that any line bundle $L$ admits a bundle embedding into the trivial bundle
$X \times \mathbb C^N$
for some $N>0$ (see, e.g., \cite[Cor. 7.3.2]{Fo}). Treating $s$ as a map into $\mathbb C^N$ we may find a map
$\widetilde s: X \to \mathbb C^N$, which is a small perturbation of $s$ and $\widetilde s(q) \ne 0$ and $\widetilde s^{-1}(0) \cap K = \varnothing$. Then the projection of $\widetilde s$ to $L$ is a holomorphic section of $L$ whose zero locus avoids $\widetilde K$.
This shows that  $\mathcal{M}\text{-hull}(\widetilde K)=\widetilde K$ as claimed.

Suppose that any holomorphic function defined on a neighbourhood of $\widetilde K$ is the uniform limit on $\widetilde K$ of a sequence of strong meromorphic functions. Consider the function
\[
\psi(z)=
\begin{cases}
	0,&\text{ when }z\in K,\\
	1,&\text{ when }z=p .
\end{cases}
\]
By trivial extension, we may consider $\psi$ as a holomorphic function defined in a neighbourhood of~$\widetilde K$. By our assumption, given $\varepsilon>0$ there exists a $f/g\in\mathcal{SM}(X)$ without poles on $\widetilde K$ such that $\|\psi-(f/g)\|_{\widetilde K}<\varepsilon$, in particular,  $|1-f(p)/g(p)|<\varepsilon$. On the other hand, since
$p \in \mathcal{SM}\text{-hull}(\widetilde K)$ we have
\[
	\left|\frac{f(p)}{g(p)}\right|\leq\left\|\frac{f}{g}\right\|_K<\varepsilon.
\]
This is a contradiction for small  $\epsilon>0$.
\end{example}

In the case of $X=\mathbb{C}^n$, a partial converse to Oka--Weil is known~\cite[Thm 1.2.10]{St}: \textit{A compact $K\subset\mathbb{C}^n$ is rationally convex whenever any continuous function on $K$ is the uniform limit on $K$ of rational functions with poles off $K$}. We will now show the analogues for strong and weak meromorphic convexity hold. This is where the equivalence of meromorphic and hypersurface convexity plays an important role.

For a compact $K\subset X$ denote by $\mathcal{M}(K)$ (resp. $\mathcal{SM}(K)$) the uniformly closed subalgebra of $\mathcal{C}(K)$ that consists of all the functions that can be approximated uniformly on $K$ by meromorphic (resp. strongly meromorphic) functions with poles off $K$.

\begin{theorem}
	A compact subset $K$ of a Stein manifold $X$ is meromorphically convex if $\mathcal{M}(K)=\mathcal{C}(K)$. It is strongly meromorphically convex if $\mathcal{SM}(K)=\mathcal{C}(K)$.
\end{theorem}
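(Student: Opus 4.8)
The plan is to prove both statements by contraposition, in each case producing a point $p$ in the appropriate hull but outside $K$ and using the approximation hypothesis to derive a contradiction. The two cases require genuinely different machinery: the meromorphic statement is handled by a Gelfand-type character argument, while the strong statement is best reduced to the classical Euclidean converse to Oka--Weil via the embedding of Proposition~\ref{p.rat}.

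For the meromorphic case, suppose $\mathcal{M}(K)=\mathcal{C}(K)$ but $\mathcal{M}\text{-hull}(K)\neq K$, and fix $p\in\mathcal{M}\text{-hull}(K)\setminus K$. Let $B$ be the algebra of meromorphic functions on $X$ that are holomorphic on a neighbourhood of $K\cup\{p\}$, regarded as a subalgebra of $\mathcal{C}(K)$ by restriction, and define $\chi(m)=m(p)$. Since $p\in\mathcal{M}\text{-hull}(K)$, every $m\in B$ satisfies $|m(p)|\leq\|m\|_K$; in particular $m|_K=0$ forces $m(p)=0$, so $\chi$ descends to a well-defined, multiplicative, norm-decreasing functional on the image of $B$ in $\mathcal{C}(K)$. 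The crux is then to show that $B$ is dense in $\mathcal{C}(K)$. Because $\mathcal{M}(K)=\mathcal{C}(K)$, it suffices to approximate, uniformly on $K$, an arbitrary meromorphic function with poles off $K$ by elements of $B$; using the weak Poincar\'e solvability on the Stein manifold $X$ I would write such a function in reduced form $f/g$ with $f,g\in\mathcal{O}(X)$ and $g\neq 0$ on $K$. Here $f\in B$ and $1/g$ is the uniform limit on $K$ of $1/(g-\varepsilon)\in B$, whose polar set $\{g=\varepsilon\}$ avoids both $K$ and $p$ for small $\varepsilon\neq 0$; as the closed algebra $\overline{B}$ then contains $f$ and $1/g$, it contains $f/g$. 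I expect this density step to be the main obstacle, precisely because $B$ excludes all functions with a pole or point of indeterminacy at $p$, and one must verify the surviving functions still generate everything.

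Granting the density, $\chi$ extends to a character of $\mathcal{C}(K)=\overline{B}$, which is necessarily evaluation at some $x\in K$. Then $h(p)=\chi(h)=h(x)$ for every $h\in\mathcal{O}(X)$, contradicting the fact that on a Stein manifold global holomorphic functions separate the distinct points $p$ and $x$. This yields $\mathcal{M}\text{-hull}(K)=K$.

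For the strong case the same argument is obstructed, since strongly meromorphic functions are not closed under products and so $\chi$ cannot be controlled on the algebra they generate; instead I would fix a proper holomorphic embedding $\Phi:X\to\mathbb{C}^N$ and reduce to the plane. The key inclusion to establish is $\mathcal{SM}(K)\subseteq\Phi^{*}\mathcal{R}(\Phi(K))$, where $\mathcal{R}(\Phi(K))$ denotes the uniform closure on $\Phi(K)$ of rational functions with poles off $\Phi(K)$: given a strongly meromorphic $F/G$ with poles off $K$, one extends $F$ and $G$ to entire functions on $\mathbb{C}^N$ by Oka--Cartan, approximates them by polynomials on a neighbourhood of the compact set $\Phi(K)$, and takes quotients to realize $F/G$ as a uniform limit on $\Phi(K)$ of rational functions whose poles stay off $\Phi(K)$. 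The hypothesis $\mathcal{SM}(K)=\mathcal{C}(K)$ then squeezes $\mathcal{R}(\Phi(K))=\mathcal{C}(\Phi(K))$, so $\Phi(K)$ is rationally convex by the classical converse to Oka--Weil (Stout, Thm~1.2.10), and Proposition~\ref{p.rat} gives that $K$ is $\mathcal{SM}$-convex. The main subtlety in this case is controlling the polar loci of the approximating rational functions so that they remain off $\Phi(K)$, which is what makes the inclusion $\mathcal{SM}(K)\subseteq\Phi^{*}\mathcal{R}(\Phi(K))$ legitimate.
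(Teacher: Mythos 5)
Your strong case is handled correctly, but your meromorphic case has a genuine gap at the density step, exactly where you anticipated trouble --- though the failure is not quite where you located it. The broken step is the representation ``$m=f/g$ with $f,g\in\mathcal{O}(X)$ and $g\neq 0$ on $K$'' obtained from ``weak Poincar\'e solvability.'' The weak Poincar\'e theorem produces \emph{some} global quotient representation, but numerator and denominator may share zero divisors, so $g$ can vanish on $K$ even though $m$ is holomorphic there; removing the common divisor is the \emph{strong} Poincar\'e problem, solvable on a Stein manifold only when $H^2(X,\mathbb{Z})=0$ (Ephraim~\cite{Ep}). Indeed, a representation with $g\neq 0$ on $K$ requires an entire function whose divisor equals the polar divisor of $m$ plus an effective divisor avoiding $K$, and this can be impossible: in Col\c{t}oiu's examples (the ones the paper invokes in Example~\ref{e.app}) there is a point $p'\in H(K)\setminus h(K)$, and the construction in the proof of Proposition~\ref{p.smero} produces a meromorphic function with poles off $K$ whose polar set is a hypersurface through $p'$ avoiding $K$. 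Any representation of that function with entire denominator non-vanishing on $K$ would exhibit a principal hypersurface through $p'$ avoiding $K$, contradicting $p'\in H(K)$. So the step is false in general; it founders on precisely the $\mathcal{M}$-versus-$\mathcal{SM}$ dichotomy this paper is about.

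The density claim itself is nevertheless true, for a reason that makes it trivial: your point $p$ lies in $\mathcal{M}\text{-hull}(K)=h(K)$ by Proposition~\ref{p.smero}, and the polar set of any $m\in\mathcal{M}(X)$ with poles off $K$ is (the support of) a closed complex hypersurface disjoint from $K$, which therefore cannot pass through $p$. Hence every meromorphic function with poles off $K$ is automatically holomorphic in a neighbourhood of $K\cup\{p\}$, i.e., it already belongs to your algebra $B$, and density follows at once from the hypothesis $\mathcal{M}(K)=\mathcal{C}(K)$. With this substitution your character argument --- well-definedness of $\chi$ from the hull inequality, continuous multiplicative extension to $\mathcal{C}(K)$, realization as evaluation at some $x\in K$, contradiction with point separation by $\mathcal{O}(X)$ --- coincides in substance with the paper's proof, which extends each element of $\mathcal{M}(K)$ to the hull by a Cauchy-sequence argument and implicitly relies on the same geometric fact about poles at points of $h(K)$.

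Your treatment of the strong case, by contrast, is complete and genuinely different from the paper's. The paper dismisses it as ``an identical argument,'' which glosses over the very issue you flag: $\mathcal{SM}(X)$ is closed under neither products nor differences, so the hull inequality cannot be applied to $m_k-m_\ell$ without further justification (one can supply it using $\mathcal{SM}\text{-hull}(K)=H(K)$, since the difference of two coprime quotients $f_k/g_k-f_\ell/g_\ell$ has polar set inside the principal hypersurface $\{g_kg_\ell=0\}$ avoiding $K$). Your route --- Oka--Cartan extension of the coprime numerator and denominator to $\mathbb{C}^N$, Taylor approximation, with the denominator staying zero-free on $\Phi(K)$ because it is bounded below in modulus there, then Stout's converse and Proposition~\ref{p.rat} --- avoids the issue entirely, at the cost of invoking the embedding; the paper's approach, once patched, stays intrinsic to $X$.
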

\begin{proof}
	We follow Stout~\cite{St}. First note that every $m\in\mathcal{M}(K)$ has a natural extension to the compact set $\mathcal{M}$-hull$(K)$. Indeed, if $\{m_k\}_{k=1}^{\infty}$ is a sequence of meromorphic functions with poles off $K$ converging to $m$ uniformly on $K$, then $\{m_k\}_{k=1}^{\infty}$ is in fact Cauchy and hence convergent at any point $a\in\mathcal{M}$-hull$(K)$, since $|m_k(a)-m_\ell(a)|\leq \|m_k-m_\ell\|_K$ for $k,\ell\in\mathbb{N}$. We denote by $\widehat{m}\in\mathcal{M}(\mathcal{M}$-hull$(K))$ this extension of $m$ to $\mathcal{M}$-hull$(K)$. This yields a natural identification of $\mathcal{M}\big(\mathcal{M}\text{-hull}(K)\big)$ with $\mathcal{M}(K)$.

	Seeking a contradiction, suppose $K$ is not meromorphically convex and choose $a\in\mathcal{M}\text{-hull}(K)\setminus K$. Then the $\mathbb{C}$-linear functional $T$ on $\mathcal{M}(K)$ defined by $m\mapsto \widehat{m}(a)$ satisfies $T(u\cdot v)=T(u)T(v)$ for each $u,v\in\mathcal{M}(K)$. Since $\mathcal{M}(K)=\mathcal{C}(K)$ by assumption, $T$ is also a $\mathbb{C}$-linear functional on $\mathcal{C}(K)$ satisfying $T(f\cdot g)=T(f)T(g)$ for all $f,g\in\mathcal{C}(K)$. It is well-known~\cite[Thm 1.2.8]{St} that all such functionals on $\mathcal{C}(K)$ can be realized as evaluation functionals at a unique point of $K$, i.e., there exists a unique point $z\in K$ such that $T(f)=f(z)$ for all $f\in\mathcal{C}(K)$. But this means that
\[
	T(m)=m(a)=m(z)
\]
	for all $m\in\mathcal{M}\big(\mathcal{M}\text{-hull}(K)\big)$, and since the algebra $\mathcal{M}\big(\mathcal{M}\text{-hull}(K)\big)$ separates points ($X$ is Stein), this is a contradiction.

	The case of $\mathcal{C}(K)=\mathcal{SM}(K)$ is an identical argument.
\end{proof}

\section{Duval--Sibony for Strong Meromorphic Convexity}\label{s.duvsib}
Recall that a submanifold $S$ of a complex manifold $X$ is \textit{totally real} if for every $x\in S$ the tangent space $T_xS$ contains no complex line.
Duval and Sibony~\cite[Theorem 3.1]{DuSi} proved the following striking result:  {\it a smooth compact totally real submanifold
$S \subset\mathbb{C}^n$  is rationally convex if and only if there exists a smooth strictly plurisubharmonic function $\varphi$ on $\mathbb{C}^n$ such that $\iota^*_S\text{d}\text{d}^c\varphi=0$, where $\iota_S:S\to\mathbb{C}^n$ is the inclusion map.} Note that  $\omega:=\text{d}\text{d}^c\varphi$ a K\"ahler form on $\mathbb C^n$.

We say that a K\"ahler form $\omega$ on a complex manifold $X$ is a \textit{Hodge form} if $[\omega]\in H^2(X,\mathbb{Z})$, that is, $[\omega]\in H_{\text{dR}}^2(X,\mathbb{R})$ lies in the image of the morphism $H^2(X,\mathbb{Z})\to H^2(X,\mathbb{R})\cong H^2_{\text{dR}}(X,\mathbb{R})$ induced by the containment $\mathbb{Z}\hookrightarrow\mathbb{R}$. Guedj~\cite{Gu} further generalized the theorem of Duval--Sibony to the context of complex projective manifolds and Stein manifolds. The generalization to  Stein manifolds can be stated using the terminology of the present work as follows.

\begin{theorem}[Guedj~\cite{Gu}, Thm 5.8]\label{t.DSG}
Let $S$ be a smooth compact totally real submanifold of a Stein manifold $X$. the following are equivalent:
\begin{enumerate}
	\item[(i)] $S$ is meromorphically convex.
	\item[(ii)] There exists a smooth Hodge form $\omega$ for $X$ such that $\iota^*_S\omega=0$.
\end{enumerate}
\end{theorem}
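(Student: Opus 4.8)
The plan is to prove Theorem~\ref{t.DSG} by reducing the meromorphic (hypersurface-convexity) case to the rational-convexity theorem of Duval--Sibony through the embedding dictionary of the previous section, and to translate the resulting K\"ahler form back to $X$ via pullback. Throughout I will freely use Proposition~\ref{p.smero}, which identifies meromorphic convexity with $h(K)$, convexity with respect to arbitrary (not necessarily principal) hypersurfaces, as well as Proposition~\ref{p.rat} relating $\mathcal{SM}$-convexity to rational convexity of an embedded image. The subtlety to keep in mind is that Theorem~\ref{t.DSG} concerns $\mathcal{M}$-convexity, not $\mathcal{SM}$-convexity, so the integrality (Hodge) condition $[\omega]\in H^2(X,\mathbb{Z})$ must carry exactly the cohomological information that distinguishes hypersurfaces from principal hypersurfaces.

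For the direction (ii) $\Rightarrow$ (i), I would argue as in Duval--Sibony: given a Hodge form $\omega$ with $\iota_S^*\omega=0$, one wants to produce, through each point $q\notin S$, a complex hypersurface avoiding $S$. The standard mechanism is that an isotropic totally real submanifold for a K\"ahler form bounds no nonconstant positive holomorphic chains, and one constructs a hypersurface (or a positive divisor) through $q$ whose intersection with $S$ would force a nonzero symplectic area against $\omega$, contradicting $\iota_S^*\omega=0$. Concretely, I would first embed $X\hookrightarrow\mathbb{C}^N$ properly, but the integrality of $[\omega]$ is what lets one realize $\omega$ as the curvature of a holomorphic line bundle $L\to X$ with a Hermitian metric; the zero divisors of sections of $L$ then furnish the candidate hypersurfaces through prescribed points, and the isotropy $\iota_S^*\omega=0$ prevents these divisors from being trapped on $S$. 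This is where the Hodge hypothesis is used essentially, and it is exactly why only hypersurfaces (sections of a possibly nontrivial $L$), rather than principal hypersurfaces (global functions), appear.

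For the converse (i) $\Rightarrow$ (ii), I would start from an arbitrary smooth Hodge (in fact K\"ahler) form $\omega_0$ on $X$ coming from a strictly plurisubharmonic exhaustion, and seek to modify it within its cohomology class by a $dd^c$-exact term so that the modified form $\omega=\omega_0+dd^c\chi$ pulls back to zero on $S$. The obstruction to solving $\iota_S^*\omega=0$ is precisely the cohomology class $[\iota_S^*\omega_0]\in H^2(S,\mathbb{R})$; the meromorphic convexity of $S$ must be shown to force this class to vanish (or to be killed by an admissible correction). The bridge is a Gromov--Lawson/Duval--Sibony type argument: if $[\iota_S^*\omega_0]\neq 0$, then $S$ carries a nontrivial closed current whose presence is incompatible with the existence of enough hypersurfaces avoiding $S$, i.e.\ with $h(S)=S$. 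I would make this quantitative by pairing $\omega$ against the homology classes detected by the divisors of sections of the line bundle $L$ associated to $[\omega]$, using that $[\omega]\in H^2(X,\mathbb{Z})$ identifies with an element of $\operatorname{Pic}(X)$.

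The main obstacle I anticipate is the careful bookkeeping of the integrality condition in the noncompact Stein setting, and in particular distinguishing $H^2(X,\mathbb{Z})$ from the weaker invariant $\operatorname{Hom}(H_2(X,\mathbb{Z}),\mathbb{Z})$ that Col\c{t}oiu's result (quoted before Proposition~\ref{p.smero}) shows governs the gap between $h(K)$ and $H(K)$. Since Theorem~\ref{t.DSG} is the meromorphic (not strong) statement, the correct invariant pairing the form $\omega$ against hypersurface classes is $\operatorname{Hom}(H_2(X,\mathbb{Z}),\mathbb{Z})$, and I would need to verify that a Hodge form's period against any integral $2$-cycle is what produces a genuine hypersurface rather than merely a principal one. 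Reconciling Guedj's projective-case techniques with the open Stein geometry—specifically ensuring that the constructed divisors extend to honest complex hypersurfaces of $X$ avoiding $S$, and that the current-theoretic compactness arguments survive the lack of compactness of $X$—is the step I expect to require the most care.
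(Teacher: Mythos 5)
A preliminary remark: the paper does not prove Theorem~\ref{t.DSG} at all --- it is quoted from Guedj~\cite{Gu} as background. The only argument of this kind actually carried out in the paper is for the \emph{strong} analogue, Theorem~\ref{t.DSBS}, which is proved by embedding $X$ properly into $\mathbb{C}^N$ and applying Duval--Sibony there together with Lemma~\ref{l.DSBS}. So your proposal must stand on its own, and judged that way it has a genuine gap in the direction (i) $\Rightarrow$ (ii), together with a structural confusion in (ii) $\Rightarrow$ (i).

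The gap in (i) $\Rightarrow$ (ii): you start from $\omega_0=\text{d}\text{d}^c\psi$ for a strictly plurisubharmonic exhaustion $\psi$ and identify the obstruction to arranging $\iota_S^*(\omega_0+\text{d}\text{d}^c\chi)=0$ as the class $[\iota_S^*\omega_0]\in H^2(S,\mathbb{R})$, to be killed by meromorphic convexity. But for this choice of $\omega_0$ that class vanishes \emph{automatically}: $\iota_S^*\text{d}\text{d}^c\psi=\text{d}\left(\iota_S^*\text{d}^c\psi\right)$ is exact, with no hypothesis on $S$ whatsoever. Hypothesis (i) therefore never enters your scheme; if the scheme were otherwise complete, it would prove that \emph{every} compact totally real submanifold of \emph{every} Stein manifold satisfies (ii), indeed with a globally $\text{d}\text{d}^c$-exact form --- hence, by Theorem~\ref{t.DSBS} and Proposition~\ref{p.rat}, that every compact totally real submanifold of $\mathbb{C}^n$ is rationally convex. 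That conclusion is false, and is exactly why Duval--Sibony's theorem has content: for instance, a totally real torus carrying an attached analytic disc $D$ whose boundary bounds a $2$-chain $\Sigma$ inside the torus can satisfy (ii) for no K\"ahler form, since $\int_{D-\Sigma}\omega=\int_D\omega>0$ would force $[\omega]\neq 0$, impossible when $H^2_{\text{dR}}(X,\mathbb{R})=0$. What your reduction hides is that the genuine difficulty is not cohomological: once $[\iota_S^*\omega_0]=0$ one can indeed prescribe $\iota_S^*\text{d}^c\chi$ (total reality allows free choice of the normal derivative) so that $\iota_S^*(\omega_0+\text{d}\text{d}^c\chi)=0$ pointwise, but the corrected form then loses positivity away from $S$, and after cutting $\chi$ off near $S$ positivity fails on the transition collar. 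Repairing it requires plurisubharmonic functions that are pluriharmonic near $S$ and strictly plurisubharmonic on the collar, and it is precisely the convexity hypothesis that supplies them: the hypersurfaces avoiding $S$ give sections $s$ of line bundles whose truncated potentials $\log|s|_h$ are the building blocks of the construction in \cite{DuSi}, \cite{Gu}, and \cite{BoGuSh}. Convexity is the raw material of a construction, not a verifier of a vanishing class.

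On (ii) $\Rightarrow$ (i): your skeleton --- integrality yields a Hermitian line bundle $L$ with curvature $\omega$, and zero divisors of sections of $L^{\otimes k}$ are the candidate hypersurfaces --- is the right one, but the phrase ``the isotropy prevents these divisors from being trapped on $S$'' stands in for the entire analytic content. One must construct, for each $p\notin S$, a section of $L^{\otimes k}$ vanishing at $p$ and nonvanishing on $S$; this is done with H\"ormander-type $L^2$ estimates using a weight with a logarithmic singularity at $p$, and the hypothesis $\iota_S^*\omega=0$ is what allows the metric weight to be matched to high order along $S$ by moduli of local holomorphic functions, yielding the crucial lower bound for the section on $S$. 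Without that step there is no proof. Two smaller corrections: the proper embedding $X\hookrightarrow\mathbb{C}^N$ you lead with cannot help in this weak setting, since by Proposition~\ref{p.rat} rational convexity of the image characterizes \emph{strong} meromorphic convexity --- which is exactly why the paper can use the embedding to prove Theorem~\ref{t.DSBS} but must quote Guedj for Theorem~\ref{t.DSG}; and your suggestion that $\operatorname{Hom}(H_2(X,\mathbb{Z}),\mathbb{Z})$ is ``the correct invariant'' conflates Col\c{t}oiu's criterion~\cite{Co}, which decides when $h(K)=H(K)$ for \emph{all} compacts $K$, with the Hodge integrality of $[\omega]$, whose only role is to produce the line bundle $L$ and hence non-principal hypersurfaces.
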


A natural question in the context of this note is whether such a characterization of \textit{strongly} meromorphic compact totally real manifolds exists. We have the following.

\begin{theorem}\label{t.DSBS}
Let $S$ be a smooth compact totally real submanifold of a Stein manifold $X$. The following are equivalent:
\begin{enumerate}
	\item[(i)] $S$ is strongly meromorphically convex.
	\item[(ii)] There exists a a smooth strictly plurisubharmonic function $\varphi$ on $X$ such that $\iota^*_S\text{d}\text{d}^c\varphi=0$.
\end{enumerate}
\end{theorem}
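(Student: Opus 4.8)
The plan is to reduce the statement to the Euclidean case already settled by Duval--Sibony~\cite{DuSi}, transporting both convexity and potentials across a proper holomorphic embedding. Fix a proper holomorphic embedding $\Phi:X\to\mathbb C^N$ (which exists since $X$ is Stein) and set $M:=\Phi(X)$, a closed complex submanifold of $\mathbb C^N$. Since $\Phi$ is a holomorphic embedding, $\Phi(S)$ is a smooth compact totally real submanifold of $\mathbb C^N$, and by Proposition~\ref{p.rat} the manifold $S$ is $\mathcal{SM}$-convex if and only if $\Phi(S)$ is rationally convex. By Duval--Sibony applied in $\mathbb C^N$, the latter holds if and only if there is a smooth strictly plurisubharmonic $\psi$ on $\mathbb C^N$ with $\iota^*_{\Phi(S)}dd^c\psi=0$. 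Thus it suffices to match such potentials on $\mathbb C^N$ with potentials on $X$.

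The implication (i)$\Rightarrow$(ii) is then immediate: given $\psi$ as above, put $\varphi:=\psi\circ\Phi$. As $\Phi$ is a holomorphic immersion, $\varphi$ is strictly plurisubharmonic on $X$, and by naturality of $dd^c$ under holomorphic maps, $\iota_S^*dd^c\varphi=\iota_S^*\Phi^*dd^c\psi=(\Phi\circ\iota_S)^*dd^c\psi=(\Phi|_S)^*\big(\iota_{\Phi(S)}^*dd^c\psi\big)=0$, using $\Phi\circ\iota_S=\iota_{\Phi(S)}\circ(\Phi|_S)$.

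For (ii)$\Rightarrow$(i) I would transport $\varphi$ to $M$ and extend. Set $\psi_0:=\varphi\circ\Phi^{-1}$, a smooth strictly plurisubharmonic function on $M$; under $\Phi$ the hypothesis $\iota_S^*dd^c\varphi=0$ becomes $j^*dd^c\psi_0=0$, where $j:\Phi(S)\hookrightarrow M$. The crucial observation is again naturality: because $M$ is a complex submanifold whose inclusion $\iota_M:M\hookrightarrow\mathbb C^N$ is holomorphic, any smooth extension $\Psi$ of $\psi_0$ to $\mathbb C^N$ satisfies $\iota_M^*dd^c\Psi=dd^c(\Psi|_M)=dd^c\psi_0$, whence $\iota_{\Phi(S)}^*dd^c\Psi=j^*dd^c\psi_0=0$ automatically. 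So the vanishing condition on $\Phi(S)$ is preserved by every extension agreeing with $\psi_0$ along $M$; what must be arranged separately is strict plurisubharmonicity. Choosing global generators $h_1,\dots,h_k$ of the ideal of $M$ (Cartan's Theorem~A) and writing $\theta:=\sum_j|h_j|^2$, I note that $\theta|_M\equiv 0$ while $dd^c\theta$ is positive definite in the directions normal to $M$ along $M$; since $dd^c\Psi$ already restricts to $dd^c\psi_0>0$ on $TM$, the function $u:=\Psi+C\theta$ is strictly plurisubharmonic on a neighbourhood $W$ of $M$ for $C$ large, still with $u|_M=\psi_0$ and $\iota_{\Phi(S)}^*dd^c u=0$.

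The main obstacle is the passage from $u$, strictly plurisubharmonic only near $M$, to a strictly plurisubharmonic function on all of $\mathbb C^N$ \emph{without} destroying the delicate condition $\iota_{\Phi(S)}^*dd^c\psi=0$; in particular one cannot simply add a multiple of $|z|^2$ near $\Phi(S)$, since $\iota_{\Phi(S)}^*dd^c|z|^2\neq 0$ in general. Here I would exploit the compactness of $\Phi(S)$. Taking $W=\{\theta<\delta\}$ and using that $\theta=0$ exactly on $M\supseteq\Phi(S)$, I would build a global strictly plurisubharmonic $v$ on $\mathbb C^N$ --- of the form $v=\Lambda\theta+\epsilon F(|z|^2)-B$, with $F$ convex increasing chosen to dominate the growth of $u$ along the unbounded tube $\{\theta=\delta\}$ and with suitably large $\Lambda,B$ --- arranged so that $v<u$ on a neighbourhood of $\Phi(S)$ while $v>u$ near $\partial W$ and beyond. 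The regularized maximum $\psi:=\mathrm{regmax}(u,v)$, set equal to $v$ outside $W$, is then globally strictly plurisubharmonic (both functions being strictly plurisubharmonic throughout the transition region, which lies in $W$) and coincides with $u$ near $\Phi(S)$, so $\iota_{\Phi(S)}^*dd^c\psi=\iota_{\Phi(S)}^*dd^c u=0$. Duval--Sibony then gives that $\Phi(S)$ is rationally convex, and Proposition~\ref{p.rat} yields that $S$ is $\mathcal{SM}$-convex. I expect the bookkeeping in choosing $F,\Lambda,B$ --- ensuring $v$ is dominated near the compact $\Phi(S)$ yet dominates along the noncompact tube $\{\theta=\delta\}$ --- to be the only genuinely technical step.
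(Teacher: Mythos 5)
Your overall reduction is the same as the paper's: embed $X$ properly in $\mathbb{C}^N$, use Proposition~\ref{p.rat} to translate strong meromorphic convexity of $S$ into rational convexity of $\Phi(S)$, and invoke Duval--Sibony~\cite{DuSi} in $\mathbb{C}^N$; your (i)$\Rightarrow$(ii) direction is identical to the paper's. The divergence is in (ii)$\Rightarrow$(i): the paper extends the form $\text{d}\text{d}^c(\varphi\circ\Phi^{-1})$ from $M=\Phi(X)$ to a K\"ahler form on $\mathbb{C}^N$ by its Lemma~\ref{l.DSBS}, while you attempt a direct potential-level extension. That step contains a genuine gap: you assert that $u:=\Psi+C\theta$ is strictly plurisubharmonic on a uniform tube $W=\{\theta<\delta\}$ around $M$ for a single large constant $C$. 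But $M$ is a properly embedded copy of the noncompact Stein manifold $X$, hence closed and \emph{unbounded} in $\mathbb{C}^N$, and no fixed $C$ need work. At $p\in M$, splitting vectors into tangential and normal parts, positivity of $\text{d}\text{d}^cu(p)$ requires roughly $C>\big(K(p)^2/\epsilon(p)+K(p)\big)/q(p)$, where $\epsilon(p)$ is the smallest eigenvalue of $\text{d}\text{d}^c\psi_0$ on $T_pM$, $K(p)$ bounds the mixed and normal Hessian terms of the arbitrary extension $\Psi$, and $q(p)$ is the smallest eigenvalue of $\sum_j\partial h_j\otimes\overline{\partial h_j}$ on the normal space; all three can degenerate as $p\to\infty$ along $M$. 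Even where pointwise positivity on $M$ holds, the neighbourhood on which it persists may pinch to zero width at infinity, so strict plurisubharmonicity on a tube of uniform width $\delta$ is not available. Since your justification for the regularized maximum being strictly plurisubharmonic is precisely that ``both functions are strictly plurisubharmonic throughout the transition region, which lies in $W$,'' and that transition region lives inside the unbounded tube, the gluing step is unsupported as written.

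This difficulty is exactly what the paper's Lemma~\ref{l.DSBS} is engineered to handle, and it suggests how your argument could be repaired. The lemma only demands that the extension agree with the given form over a \emph{relatively compact} piece $B\subset\subset Z$ --- which suffices here, since only $\iota^*_{\Phi(S)}$ matters and $\Phi(S)$ is compact --- obtains positivity with a fixed constant only on a relatively compact sublevel set $B_3$ of an exhaustion, and then globalizes by adding $C'\,\text{d}\text{d}^c(h\circ\psi)$, where $h\circ\psi$ is plurisubharmonic everywhere, vanishes on the bottom sublevel set (so it does not disturb the restriction to $B$), and is strictly plurisubharmonic outside it, together with a cutoff $\lambda\circ\psi$. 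In your scheme the analogous fix is to force the set $\{v\le u+\eta\}$ into a fixed bounded region first (this requires $F$ to dominate $u$ throughout $\{\theta\le\delta,\ |z|\ge R\}$, not merely on the hypersurface $\{\theta=\delta\}$, since otherwise $u$ may exceed $v$ far out along $M$ itself where $\theta=0$), and only afterwards choose $C$ and $\delta$ by compactness so that $u$ is strictly plurisubharmonic on $\{\theta<\delta,\ |z|<R\}$; one must also untangle the circular dependence among $F$, $B$, $\Lambda$, $C$, $\delta$, and $R$, and note that a convex increasing $F$ with $F'>0$ cannot simultaneously be made flat where you need $v<u$. None of this is fatal, but it is the actual mathematical content of the extension step, and as written your proof asserts rather than proves it.
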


Observe that a K\"ahler form $\omega$ has a $\text{d}\text{d}^c$-potential---as in condition (ii) of Theorem~\ref{t.DSBS}---if and only if $[\omega]=0\in H^2(X,\mathbb{Z})$. Indeed, it is clear that $[\text{d}\text{d}^c\varphi]=0\in H^2(X,\mathbb{Z})$ whenever $\varphi\in\text{PSH}(X)$. Conversely, if $\omega$ is a Hodge form, then there exists a holomorphic line bundle $L\to X$ and metric $\psi$ on $L$ such that $\text{d}\text{d}^c\psi=\omega$  \cite[Thm 13.9(b)]{De}. If we know further that $[\omega]=0\in H^2(X,\mathbb{Z})$, then $L$ has Chern class zero and hence is isomorphic to the trivial bundle on $X$. It follows that $\psi$ can be realized as a global strongly plurisubharmonic function on $X$. This is the case, for example, when $X = \mathbb C^n$, in this setting Theorems~\ref{t.DSG} and~\ref{t.DSBS} are both reduced precisely to the statement of Duval--Sibony.

Our proof of Theorem~\ref{t.DSBS} is done through the embedding of the Stein manifold $X$ into a complex Euclidean space,
which allows us to circumvent the more sophisticated methods of Guedj~\cite{Gu} and use the methods of Duval--Sibony~\cite{DuSi} directly. We require a variation of some known results on compact K\"ahler manifolds, these are formulated in the lemma below,
the proof of which will be given at the end of this section.

\begin{lemma}[cf. {\cite[Prop 2.1]{CoGuZe}}, {\cite[Thm 4.1]{OrVe}}, {\cite[Thm 4]{Sc}}]\label{l.DSBS}
Let $X$ be a Stein manifold and $Z\subset X$ be a closed complex submanifold equipped with a Hodge form $\omega$. If there exists a K\"ahler form $\eta$ on $X$ with $[\iota^*_Z\eta]=[\omega]\in H^2(Z,\mathbb{Z})$, then $\omega$ admits an extension off of any prescribed relatively compact subset of $Z$ to a K\"ahler form on $X$; that is, for any compact $B\subset\subset Z$ there exist a K\"ahler form $\widetilde\omega$ on $X$ such that $\iota^*_B\widetilde\omega=\omega$.
\end{lemma}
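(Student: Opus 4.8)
The plan is to reduce the statement to one about a single $dd^c$-potential, and then to produce $\widetilde\omega$ by gluing: using the ambient form $\eta$ where we are far from $B$, and repairing positivity near $B$ by adding a large multiple of a nonnegative potential that is strictly plurisubharmonic in the directions normal to $Z$.

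First I would exploit the cohomological hypothesis. The closed submanifold $Z$ is itself Stein, and the image of $[\iota_Z^*\eta]=[\omega]$ under $H^2(Z,\mathbb{Z})\to H^2_{\mathrm{dR}}(Z,\mathbb{R})$ shows that the real closed $(1,1)$-form $\alpha:=\omega-\iota_Z^*\eta$ has vanishing de Rham class. On a Stein manifold the vanishing of $H^{0,1}_{\bar\partial}$ (Cartan B) upgrades de Rham exactness of a real $(1,1)$-form to $dd^c$-exactness, so $\alpha=dd^c u$ for some $u\in C^\infty(Z,\mathbb{R})$; equivalently $\omega=\iota_Z^*\eta+dd^c u$ on all of $Z$. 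Extending $u$ to a smooth $U\in C^\infty(X,\mathbb{R})$ (possible since $Z$ is a closed submanifold) and using that pullback commutes with $d$ and $d^c$, we get $\iota_Z^*(\eta+dd^c U)=\omega$ on $Z$. Thus any form of the shape $\eta+dd^c U+(\text{a term killed by }\iota_Z^*)$ already has the correct restriction along $Z$.

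The positivity gadget is $\psi:=\sum_{j}|g_j|^2$, where $g_1,\dots,g_m\in\mathcal O(X)$ generate the ideal sheaf of $Z$ on a neighbourhood of $B$ (Cartan A). Then $dd^c\psi=2i\sum_j\partial g_j\wedge\overline{\partial g_j}\ge 0$ globally; since the $g_j$ vanish on $Z$ we have $\iota_Z^*dd^c\psi=0$, while near $Z$ the differentials $dg_j$ span the conormal bundle, so $dd^c\psi$ is strictly positive in the normal directions. I would then show that for $C$ large enough the form $\eta+dd^c U+C\,dd^c\psi$ is K\"ahler on a neighbourhood of $B$ and restricts to $\omega$ there. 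The restriction is immediate from the previous paragraph together with $\iota_Z^*dd^c\psi=0$. Positivity is a pointwise Schur-complement argument: at $x\in B$ the Hermitian form $\eta+dd^c U$ is positive definite on $T_xZ$ (there it equals $\omega$), $C\,dd^c\psi$ vanishes on $T_xZ$ and is positive definite on the normal space, and a positive-semidefinite form that vanishes on a subspace carries no cross terms with it; hence for $C$ large the sum is positive definite, uniformly over the compact $B$ and therefore on a neighbourhood.

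Finally I would globalize with a cutoff, and this is where the main difficulty lies. Setting $\widetilde\omega:=\eta+dd^c(\chi U)+C\,dd^c\psi$ with $\chi\equiv 1$ near $B$ and supported in a thin tubular neighbourhood, the restriction to $B$ is still $\omega$, and where $\chi\equiv 0$ one has $\widetilde\omega=\eta+C\,dd^c\psi\ge\eta>0$. The obstacle is positivity in the transition region, where $dd^c(\chi U)$ produces the indefinite terms $U\,dd^c\chi$ and $d\chi\wedge d^cU+dU\wedge d^c\chi$. I would control these by choosing $\chi$ to vary gently in the directions tangent to $Z$ (over a large relatively compact portion of $Z$ containing $B$, so the tangential derivatives of $\chi$ are as small as needed and are dominated by the fixed positive lower bound of $\eta$), while permitting a sharp cutoff of width $\delta$ in the normal directions. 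In the normal regime the bad terms are $O(\delta^{-2})$, but after fixing $\delta$ they are absorbed by enlarging $C$: the normal positivity supplied by $C\,dd^c\psi$ is of order $C$, and the Schur/Cauchy--Schwarz estimate shows that a fixed $C\gtrsim\delta^{-2}$ dominates both the normal and the mixed normal--tangential terms. Balancing these two regimes---gentle tangential transition versus large $C$ for the normal cutoff---produces a global K\"ahler form $\widetilde\omega$ on $X$ with $\iota_B^*\widetilde\omega=\omega$.
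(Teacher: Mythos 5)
Your first three steps are sound and essentially parallel the paper's own argument: the $dd^c$-potential $u$ on $Z$ with $\omega=\iota_Z^*\eta+dd^cu$ (the paper's $\varphi$), a nonnegative gadget vanishing on $Z$ that supplies positivity in the normal directions (the paper uses $A\chi\,\mathrm{dist}(\cdot,Z)^2$; your $\psi=\sum_j|g_j|^2$ is a fine variant, with the pleasant feature of being globally plurisubharmonic), and the Schur-complement argument producing a K\"ahler form with the correct restriction on a neighbourhood of $B$. The genuine gap is in the globalization step, which you correctly identify as the main difficulty but do not actually close. In $\widetilde\omega=\eta+dd^c(\chi U)+C\,dd^c\psi$ the error terms $U\,dd^c\chi+d\chi\wedge d^cU+dU\wedge d^c\chi$ must be absorbed by the available positivity, and $C\,dd^c\psi$ is positive only in conormal directions: it vanishes identically on vectors tangent to $Z$ and contributes nothing in those directions anywhere in the tube. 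So the tangential part of the error must be beaten by $\eta$ alone, and your remedy---stretch $\chi$ tangentially over a large relatively compact portion of $Z$ so that its tangential derivatives are ``as small as needed,'' dominated by ``the fixed positive lower bound of $\eta$''---is circular. What ``small enough'' means is $|d\chi|\sup|dU|+|dd^c\chi|\sup|U|\lesssim\min(\lambda_\eta,\mu)$ \emph{over the stretching region}, and as that region grows, $\sup|U|$ and $\sup|dU|$ grow while $\lambda_\eta$ (the smallest eigenvalue of $\eta$) and the Schur constant $\mu$ (which involves the constant $\varepsilon$ in $\omega\ge\varepsilon\,\iota_Z^*\eta$) decay, all in a completely uncontrolled way: $Z$ is a noncompact closed submanifold, and $u$ is determined by $dd^cu=\omega-\iota_Z^*\eta$ up to a pluriharmonic function, so its growth along $Z$ cannot be prescribed. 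No choice of stretching region need ever satisfy the requirement. (A secondary issue: if the normal cutoff is built from $\mathrm{dist}(\cdot,Z)^2$, then $dd^c(\mathrm{dist}^2)$ has tangential--tangential components of size $O(\mathrm{dist})$, producing tangential errors of size $O(\delta^{-1})$ that $C\,dd^c\psi$ cannot see; this part is repairable by building the cutoff from $\psi$ itself, since Cauchy--Schwarz gives $d\psi\wedge d^c\psi\le\psi\,dd^c\psi$, hence $\pm\,dd^c\theta(\psi/\delta^2)\lesssim\delta^{-2}dd^c\psi$, but the tangential-transition problem above remains.)

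The missing idea is exactly the paper's final ingredient: a strictly plurisubharmonic exhaustion $\psi_X$ of $X$ together with a convex increasing $h$ that is constant on a sublevel set $B_1\supset B$ and strictly convex beyond it, so that $h\circ\psi_X$ vanishes near $B$ and is strictly plurisubharmonic \emph{in all directions} in the region where the cutoff varies; adding $C'\,dd^c(h\circ\psi_X)$ then absorbs bounded (not necessarily small) cutoff errors for $C'$ large. To ensure the cutoff errors are in fact bounded, the paper first upgrades its potential from ``quasi-plurisubharmonic on a thin tube around $Z$'' to ``quasi-plurisubharmonic on a whole fixed sublevel set $B_3$'' via the regularized maximum $\varphi_3=\tfrac12\log\left(e^{2\varphi_2}+e^{\delta u+C}\right)$ of Coman--Guedj--Zeriahi, which is precisely the device that avoids products like $U\,dd^c\chi$; only then is the cutoff $\lambda\circ\psi_X$ applied, with errors supported in the fixed compact shell $\overline{B}_3\setminus B_2$ where $h\circ\psi_X$ has a uniform positive lower bound on its Levi form. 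Your construction confines all cutting off to a thin tube around $Z$, where no such all-directions absorber exists, and that is where the proof breaks.
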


\begin{proof}[Proof of Theorem~\ref{t.DSBS}]
First, we properly embed $X$ into some Euclidean space $\mathbb{C}^N$ via the holomorphic mapping $\Phi: X\to\mathbb{C}^N$.

Suppose $S$ is $\mathcal{SM}$-convex. Then by Proposition~\ref{p.rat}, $\Phi(S)$ is rationally convex in $\mathbb{C}^N$.
In view of the result of Duval--Sibony cited at the beginning of the section, there exists a $\psi\in\text{PSH}(\mathbb{C}^N)$ with $\iota^*_{\Phi(S)}\text{d}\text{d}^c\psi=0$,
and hence $\varphi:=\psi\circ\Phi\in\text{PSH}(X)$ satisfies $\iota^*_S\text{d}\text{d}^c\varphi=0$.

Conversely, suppose there exists a smooth strongly plurisubharmonic function $\varphi$ on $X$ such that $\iota^*_S\text{d}\text{d}^c\varphi=0$. Since $[\text{d}\text{d}^c(\varphi\circ\Phi^{-1})]=[\iota^*_X\text{d}\text{d}^c(|\cdot|^2)]=0\in H^2(X,\mathbb{Z})$,
by Lemma~\ref{l.DSBS}
$\text{d}\text{d}^c(\varphi\circ\Phi^{-1})$ admits an extension $\omega$ off of some large ball containing $\Phi(S)$ to all of $\mathbb{C}^N$ as a K\"ahler form. Because $\mathbb{C}^N$ is topologically trivial, there exists a strictly plurisubharmonic $\psi$ on $\mathbb{C}^N$ with $\text{d}\text{d}^c\psi=\omega$. Since
\[
	\iota^*_{\Phi(S)}\text{d}\text{d}^c\psi=\iota^*_{\Phi(S)}\text{d}\text{d}^c\omega=\iota^*_S\text{d}\text{d}^c\varphi=0,
\]
applying Duval--Sibony in the other direction shows that $\Phi(S)$ is rationally convex. It follows from Proposition~\ref{p.rat} that
$S$ is strongly meromorphically convex.
\end{proof}

\begin{proof}[Proof of Lemma~\ref{l.DSBS}]
	Let $\psi\in\mathcal{C}^\infty(X)$ be a strictly plurisubharmonic exhaustion function for $X$. Without loss of generality we can assume $B=B_1\cap Z$, where $B_1=\{z\in X\,:\,\psi(z)<c_1\}$ for some $c_1>0$. Choose $c_3>c_2>c_1$ so that their respective sublevel sets $B_3:=\{\psi<c_3\}$ and $B_2:=\{\psi<c_2\}$ satisfy $B_1\subset\subset B_2\subset\subset B_3\subset\subset X$

	Since $[\iota^*_Z]=[\omega]$ and $\omega$ is in particular K\"ahler, there exists a $\varphi\in\mathcal{C}^{\infty}(Z)$ and an $\varepsilon>0$ such that
\[
	\iota^*_Z\eta+\text{d}\text{d}^c\varphi=\omega\geq\varepsilon\cdot\iota^*_Z\eta.
\]
	We now proceed as in Coman--Guedj--Zeriahi~\cite{CoGuZe}: choose $\varphi_1$ to be any extension of $\varphi$ to $X$ and define
\[
	\varphi_2=\varphi_1+A\chi\text{dist}(\,\cdot\,,Z)^2,
\]
	where $\chi\in\mathcal{C}^{\infty}(X)$ is a cutoff function supported in a small neighbourhood of $Z$ that is identically one near $Z$, and $A>0$. Here the distance function can be any Riemannian distance on $X$, e.g., the distance associated to the K\"ahler metric $\eta$. Now $\varphi_2$ is another smooth extension of $\varphi$ to $X$, and by choosing $A$ large enough we can ensure that
\[
	\eta+\text{d}\text{d}^c\varphi_2\geq\frac{\varepsilon}2\eta\quad\text{on }B_3.
\]
	Define $u=\chi\log (\text{dist}(\,\cdot\,,X)^2)$; by shrinking the support of $\chi$ (and consequently increasing $A>0$ if necessary), we can ensure that the function $\log (\text{dist}(\,\cdot\,,X))^2$ is well-defined and quasi-plurisubharmonic on $\text{supp}(\chi)$. Hence there is a small $\delta>0$ such that $\delta\cdot\text{d}\text{d}^cu\geq-\eta$ on $B_3$.

	We define one more smooth extension of $\varphi$:
\[
	\varphi_3=\frac{1}{2}\log\left(e^{2\varphi_2}+e^{\delta u+C}\right).
\]
A standard calculation at points of $B_3$ yields
\begin{align*}
	\eta+\text{d}\text{d}^c\varphi_3&\geq\eta+\frac{2e^{2\varphi_2}\text{d}\text{d}^c\varphi_2+\delta e^{\delta u+C}\text{d}\text{d}^c u}{2(e^{2\varphi_2}+e^{\delta u+C})}\\
	&=\frac{2e^{2\varphi_2}(\eta+\text{d}\text{d}^c\varphi_2)+e^{\delta u+C}(\eta+\delta\text{d}\text{d}^cu)}{2(e^{2\varphi_2}+e^{\delta u+C})}\\
	&\geq\frac{\varepsilon e^{2\varphi_2}}{2(e^{2\varphi_2}+e^{\delta u+C})}\eta\geq\frac{\varepsilon}4\eta.
\end{align*}
	It follows that $\eta+\text{d}\text{d}^c\varphi_3$ is a K\"ahler form on $B_3$ which extends $\omega$ off $B$.

	To complete the proof we will modify this form off of $B_1$ so that it is K\"ahler on all of $X$, through a standard procedure. Choose a smooth function $h:\mathbb{R}\to [0,\infty)$ that is constant for $t\leq c_1$, is strictly convex and increasing for $t\in (c_1,c_2)$, and $h(t)=t$ for $t\geq c_3$. Then $h\circ\psi$ is plurisubharmonic on $X$, strictly plurisubharmonic outside $\overline{B}_1$, and vanishes on $\overline{B}_1$. Next, choose a smooth function $\lambda:\mathbb{R}\to [0,1]$ that is identically one for $t\leq c_2$ and identically zero for $t>c_3$. The form
\[
	\widetilde\omega:=\eta+\text{d}\text{d}^c\big((\lambda\circ\psi)\varphi_3\big)+C'\cdot\text{d}\text{d}^c(h\circ\psi)
\]
extends $\iota^*_B\omega$, and is K\"ahler for $C'>0$ large enough.
\end{proof}

\section{Characterization of Strong Meromorphic Convexity}\label{s.strong}
As mentioned above, Col\c{t}oiu~\cite{Co} showed that if a Stein manifold $X$ satisfies $\text{Hom}\big(H_2(X;\mathbb{Z});\mathbb{Z}\big)\ne 0$, then there exist compacts $K\subset X$ for which $\mathcal{SM}\text{-hull}(K)\ne \mathcal{M}\text{-hull}(K)$. In this context a natural question is the following: {\it when do $\mathcal{M}\text{-hull}(K)$ and $\mathcal{SM}\text{-hull}(K)$ coincide for a \textup{given} compact~$K$?}
To formulate our results we first introduce some terminology.
\begin{defn}
A domain $\Omega$ on a manifold $X$ is called meromorphically (resp. strongly meromorphically) Runge if $\mathcal{M}\text{-hull}(K)$ (resp. $\mathcal{SM}\text{-hull}(K)$) with respect to $X$ is compact in $\Omega$ for every compact subset $K$ of $\Omega$.
\end{defn}

We first consider the case when $K=S$ is a meromorphically convex totally real manifold on~$X$. Recall that by Theorem~\ref{t.DSG} there exists a smooth Hodge form $\omega$ on $X$ such that $\iota^*_S \omega =0$. Further, there exists a holomorphic line bundle $L\to X$ and metric $\varphi$ on $L$ such that $\iota^*_{S}\text{d}\text{d}^c\varphi=0$.

\begin{theorem}\label{t.BSC}
	Let $S$ be a smooth compact totally real submanifold of a Stein manifold $X$ that is meromorphically convex.
	Then $S$ is strongly meromorphically convex if and only if there exists a Stein neighbourhood $U$ of $S$ that is strongly
meromorphically Runge and an integer $k>0$ such that $L^{\otimes k}|_U$ is trivial.
\end{theorem}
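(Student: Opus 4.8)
The plan is to prove both implications by leveraging the line bundle $L$ and its metric $\varphi$ furnished by meromorphic convexity, the Duval--Sibony type criterion of Theorem~\ref{t.DSBS}, and the Runge hypothesis to pass between $U$ and $X$. The organizing observation is that triviality of $L^{\otimes k}|_U$ turns the Hermitian metric $\varphi$ into an honest strictly plurisubharmonic potential on $U$: if $\sigma$ is a nowhere-vanishing holomorphic section of $L^{\otimes k}$ over $U$, then $u:=-\log\|\sigma\|^2_{\varphi^{\otimes k}}$ is smooth on $U$ and satisfies $dd^c u = k\,dd^c\varphi$, a K\"ahler form with $\iota_S^*dd^c u = k\,\iota_S^*dd^c\varphi = 0$. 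Thus $u$ is strictly plurisubharmonic on $U$ and isotropic on $S$, which is exactly condition (ii) of Theorem~\ref{t.DSBS} with ambient manifold $U$.

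For the implication ($\Leftarrow$) I would first feed $u$ into Theorem~\ref{t.DSBS} applied to $S\subset U$ to conclude that $S$ is $\mathcal{SM}$-convex \emph{in} $U$; equivalently, by Proposition~\ref{p.smero}, for each $p\in U\setminus S$ there is an $h\in\mathcal{O}(U)$ with $h(p)=0$ whose zero set avoids $S$. The strongly meromorphically Runge hypothesis gives $\widehat{S}:=\mathcal{SM}\text{-hull}(S)\subset\subset U$, and $\widehat{S}$ is itself $\mathcal{SM}$-convex. For a point $p\in\widehat{S}\setminus S$ the separating function $h$ is holomorphic on the neighbourhood $U$ of $\widehat{S}$, so I would apply the approximation Theorem~\ref{t.approx} on $\widehat{S}$ to obtain entire $P,Q$, pointwise coprime on $X$, with $\|h-P/Q\|_{\widehat{S}}<\varepsilon$. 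Since $h(p)=0$ while $|h|\ge\delta>0$ on $S$, for small $\varepsilon$ the function $Q/P\in\mathcal{SM}(X)$ has no indeterminacy on $S\cup\{p\}$ and satisfies $|Q(p)/P(p)|>1/\varepsilon>2/\delta>\|Q/P\|_S$, contradicting $p\in\mathcal{SM}\text{-hull}(S)$. Hence $\widehat{S}=S$ and $S$ is $\mathcal{SM}$-convex in $X$.

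For the converse ($\Rightarrow$), strong meromorphic convexity forces meromorphic convexity, so Theorem~\ref{t.DSG} supplies $L$ and a metric $\varphi$ with $\iota_S^*dd^c\varphi=0$; the restricted bundle $\iota_S^*L$ is then flat, so $c_1(L)|_S$ is torsion in $H^2(S,\mathbb{Z})$ and $k\,c_1(L)|_S=0$ for some $k>0$. Taking a Stein tubular neighbourhood of $S$ that deformation retracts onto $S$ makes the restriction $H^2(U,\mathbb{Z})\to H^2(S,\mathbb{Z})$ an isomorphism, whence $k\,c_1(L|_U)=0$; since $\mathrm{Pic}(U)\cong H^2(U,\mathbb{Z})$ for Stein $U$, the bundle $L^{\otimes k}|_U$ is holomorphically trivial. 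To make such a $U$ Runge I would embed $X$ properly by $\Phi$ and invoke Proposition~\ref{p.rat}: $\Phi(S)$ is rationally convex in $\mathbb{C}^N$ and hence admits a neighbourhood basis of rationally convex (and Runge) open sets, whose $\Phi$-preimages are strongly meromorphically Runge neighbourhoods of $S$ in $X$.

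The main obstacle I anticipate is the passage from $\mathcal{SM}$-convexity relative to $U$ to $\mathcal{SM}$-convexity relative to $X$ in the ($\Leftarrow$) direction: comparing hulls na\"ively yields only the trivial inclusion $S\subset\mathcal{SM}\text{-hull}(S)$, so a genuine globalization of the locally principal separating hypersurfaces is required. This is precisely where the Runge hypothesis is indispensable, and I resolve it by routing the local function $h$ through the paper's own Oka--Weil theorem (Theorem~\ref{t.approx}) on the compact $\widehat{S}$. A secondary technical point, in the ($\Rightarrow$) direction, is arranging one neighbourhood $U$ that is simultaneously strongly meromorphically Runge and topologically simple enough (retracting onto $S$) to trivialize $L^{\otimes k}$; this amounts to checking that the rationally convex Runge neighbourhoods of $\Phi(S)$ can be chosen inside a tubular neighbourhood retracting onto $S$, which follows from the continuity of \v{C}ech cohomology as the neighbourhoods shrink to $S$.
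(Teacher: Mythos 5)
Your ``$\Leftarrow$'' direction is essentially the paper's own argument: trivialize $L^{\otimes k}|_U$ to get a strictly plurisubharmonic potential with $\iota_S^*\text{d}\text{d}^c u=0$, apply Theorem~\ref{t.DSBS} with ambient manifold $U$, then use the Runge hypothesis together with Theorem~\ref{t.approx} to globalize. The only difference is cosmetic: you conclude by violating the hull inequality with $Q/P$ at a point $p\in\mathcal{SM}\text{-hull}(S)\setminus S$, while the paper builds the entire function $u-v\,u(a)/v(a)$ whose zero locus separates $a$ from $S$ (and then adds a connected-component remark that your formulation makes unnecessary); your bookkeeping with $\varepsilon<\delta/2$ and the absence of indeterminacy points on $\widehat S$ is correct.

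The ``$\Rightarrow$'' direction is where you genuinely diverge, and your route works. The paper produces the Stein, strongly meromorphically Runge neighbourhoods by invoking the external K\"ahler-extension theorem of Boudreaux--Gupta--Shafikov~\cite{BoGuSh}: since $S=\mathcal{SM}\text{-hull}(S)$, the form $\text{d}\text{d}^c\rho$ ($\rho$ the squared distance to $S$) extends to a K\"ahler form with trivial integral class, and the converse implication of that theorem makes the sublevel sets $U_\varepsilon=\{\rho<\varepsilon\}$ Stein with $\mathcal{SM}$-convex closures, hence Runge. You instead push everything into $\mathbb{C}^N$ via a proper embedding $\Phi$, use Proposition~\ref{p.rat} to see that $\Phi(S)$ is rationally convex, and pull back small rationally Runge neighbourhoods of $\Phi(S)$. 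This is more elementary and avoids~\cite{BoGuSh} entirely, which is a genuine advantage; but it rests on two facts you assert rather than prove: (a) a rationally convex compact in $\mathbb{C}^N$ admits a neighbourhood basis of \emph{Stein}, rationally Runge open sets --- true, via rational polyhedra: normalize polynomials $g_i$ with $g_i^{-1}(0)\cap\Phi(S)=\varnothing$ and $\min_{\Phi(S)}|g_i|=1$ so that the sets $\{|g_i|<1/2\}$ cover $\overline{B}_R\setminus W'$, and take $V=\{z\in B_R:|g_i(z)|>3/4\ \forall i\}$, which is pseudoconvex and traps rational hulls of its compacts by means of the functions $1/g_i$; and (b) the hull identity $\Phi\bigl(\mathcal{SM}\text{-hull}(K)\bigr)=\mathcal{R}\text{-hull}\bigl(\Phi(K)\bigr)$, which is what makes $\Phi^{-1}(V)$ strongly meromorphically Runge --- the paper states Proposition~\ref{p.rat} only for convex compacts, though its proof (Oka--Cartan extension one way, restriction the other) does yield the hull version. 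Both are standard, so these are citation gaps rather than mathematical ones; note also that Steinness of $U=\Phi^{-1}(V)$ should be recorded (it is a closed submanifold of the Stein polyhedron $V$). The topological step is the same in both proofs (torsion of $c_1(L)$ detected on $S$, then Oka--Grauert on a Stein neighbourhood); your appeal to continuity of \v{C}ech cohomology is not needed, since once $k\,c_1(L)$ vanishes on a tubular neighbourhood retracting onto $S$, it vanishes on every smaller open set by restriction alone.
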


\begin{proof}
Suppose there exists a Stein neighbourhood $U$ of $S$ which is strongly meromorphically Runge and an integer $k$ such that
the line bundle $L^{\otimes k}|_U$ is trivial. Then $k\varphi|_U$ is a strictly plurisubharmonic function, so by Theorem~\ref{t.DSBS} we see that
$S$ is convex with respect to strong meromorphic functions on $U$. Since $U$ is strongly meromorphically Runge, we at least have
$\mathcal{SM}\text{-hull}(S)\subset U$. We claim that to prove that $S=\mathcal{SM}\text{-hull}(S)$ it suffices to find for every $a\in U\setminus S$ a function $g\in\mathcal{O}(X)$ whose zero locus passes through $a$ but avoids $S$. Indeed, if this holds, then the compact $\mathcal{SM}\text{-hull}(S)$ contains $S$ as a connected component. A simple argument using Theorem~\ref{t.approx} shows that a connected component of a $\mathcal{SM}$-convex compact is $\mathcal{SM}$-convex, and this proves the claim.

Accordingly, fix $a\in U$. Then there exists a $f\in\mathcal{O}(U)$ such that $f(a)=0$ and whose zero set avoids $S$. Since $U$ is strongly meromorphically Runge, $f$ can be approximated normally in $U$ by members of $\mathcal{SM}(X)$ with poles outside a large compact of $U$ (Theorem~\ref{t.approx}), so for $u/v\in\mathcal{SM}(X)$ sufficiently close to $f$ on a neighbourhood of $S\cup\{a\}$, the function $z\mapsto u(z)-v(z)\tfrac{u(a)}{v(a)}$ is holomorphic on $X$ with a zero at $z=a$ and has zero locus omitting $S$. This proves that $S$ is strongly meromorphically convex.

	Conversely, suppose that $S={\mathcal SM}\text{-hull}(S)$. Since $S$ is totally real, $\text{dist}^2(x,S)$---the square-distance function to $S$---is strictly plurisubharmonic in a small tubular neighbourhood $U$ of $S$. Set $\rho:=\text{dist}^2(\cdot,S)|_U$ and note that $S=\{x\in U\,:\,\rho(x)\leq 0\}$. Because $S={\mathcal SM}\text{-hull}(S)$, Boudreaux--Gupta--Shafikov~\cite[Thm 1.2]{BoGuSh} shows
that there exists an extension of $\text{d}\text{d}^c\rho$ to a K\"ahler form $\omega$ on $X$ with $[\omega]=[0]\in H^2(X,\mathbb{Z})$. Applying the converse implication of the same theorem, we see that neighbourhoods of the form $U_{\varepsilon}:=\{x\in U\,:\,\rho(x)<\varepsilon\}$ are Stein and have closures that are convex with respect to strong meromorphic functions as well.
It follows that $U_\varepsilon$ is strongly meromorphically Runge for $\varepsilon>0$ small enough.

	We lastly must show that $L^{\otimes k}|_{U_\varepsilon}$, the line bundle $L\to X$ given to us by Theorem~\ref{t.DSG} (see the paragraph above the statement of Theorem~\ref{t.BSC}), is trivial. Fix such a small $\varepsilon>0$. By Sard's theorem, we may assume that $U_{\varepsilon}$ has smooth boundary and hence has finitely generated cohomology groups. We will show that $L^{\otimes k}|_{U_\varepsilon}$ is trivial for some positive integer $k$. The neighbourhood $U_{\varepsilon}$ is a deformation retract of $S$, and so in particular we have $H^2_{\text{dR}}(S,\mathbb{R})\cong H^2_{\text{dR}}(U_\varepsilon,\mathbb{R})$. But
\[
	c_1(L|_S)=[\iota^*_S\text{d}\text{d}^c\varphi]=[0]\in H^2_{\text{dR}}(S,\mathbb{R}) ,
\]
	so $c_1(L|_{U_\varepsilon})=[0]\in H^2_{\text{dR}}(U_\varepsilon,\mathbb{R})$ as well; that is, the image of the first Chern class of $L$ in $H^2_{\text{dR}}(U_\varepsilon,\mathbb{R})\cong H^2(U_\varepsilon,\mathbb{R})$ through the morphism $H^2(U_\varepsilon,\mathbb{Z})\to H^2(U_\varepsilon,\mathbb{R})$ induced by the containment $\mathbb{Z}\hookrightarrow\mathbb{R}$ is zero. Since the kernel of this morphism is precisely the torsion subgroup of $H^2(U_\varepsilon,\mathbb{Z})$, there exists an integer $k$ so that $c_1(L^k|_{U_\varepsilon})=[0]\in H^2(U_\varepsilon,\mathbb{Z})$. Furthermore, $U_\varepsilon$ is Stein, so the operator $c_1:\text{Pic}(U_\varepsilon)\to H^2(U_\varepsilon,\mathbb{Z})$ is an isomorphism, and we can conclude that $L^k|_{U_{\varepsilon}}$ is trivial.
\end{proof}

Next we formulate a characterization of strong meromorphic convexity for arbitrary compacts.

\begin{theorem}
	Let $K$ be a meromorphically convex compact in a Stein manifold $X$. Then $K$ is strongly meromorphically convex if and only if $K$ admits a strongly meromorphically Runge Stein neighbourhood $U$ with the property that for every $a\in U\setminus K$ there exists a line bundle $L$ in the torsion subgroup of $\text{Pic}(U)$ and a section $\sigma\in\Gamma(U,L)$ such that $\sigma(a)=0$ but $\sigma^{-1}(0)\cap K=\varnothing$.
\end{theorem}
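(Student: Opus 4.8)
The plan is to mimic the structure of the proof of Theorem~\ref{t.BSC}, since the present statement is its natural generalization from totally real manifolds $S$ to arbitrary meromorphically convex compacts $K$. The key difference is that without the totally real hypothesis we can no longer invoke the Duval--Sibony characterization (Theorem~\ref{t.DSBS}) to produce the relevant line bundle from a K\"ahler potential; instead the local triviality condition must be built directly into the statement via the existence of the torsion line bundle $L$ and section $\sigma$ for each $a \in U \setminus K$.

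\medskip

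For the \emph{sufficiency} direction, I would assume $K$ is meromorphically convex (so $h(K)=K$ by Proposition~\ref{p.smero}) and that the stated Stein neighbourhood $U$ exists. The goal is to show $\mathcal{SM}\text{-hull}(K)=K$, equivalently $H(K)=K$ by Proposition~\ref{p.smero}. Since $U$ is strongly meromorphically Runge we at least have $\mathcal{SM}\text{-hull}(K)\subset U$. So fix $a\in U\setminus K$; it suffices to produce $g\in\mathcal{O}(X)$ with $g(a)=0$ and $g^{-1}(0)\cap K=\varnothing$. By hypothesis there is a torsion line bundle $L\in\text{Pic}(U)$ and a section $\sigma\in\Gamma(U,L)$ with $\sigma(a)=0$ but $\sigma^{-1}(0)\cap K=\varnothing$. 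The torsion condition is exactly what lets us pass from a section of a nontrivial bundle to a genuine holomorphic function: if $L^{\otimes k}$ is trivial, then $\sigma^{\otimes k}$ is (after trivializing) a holomorphic function $f\in\mathcal{O}(U)$ whose zero set is $\sigma^{-1}(0)$, so $f(a)=0$ and $f^{-1}(0)\cap K=\varnothing$. Now I would use strong meromorphic Runge approximation (Theorem~\ref{t.approx}, as applied in the proof of Theorem~\ref{t.BSC}) to approximate $f$ normally on a neighbourhood of $K\cup\{a\}$ by some $u/v\in\mathcal{SM}(X)$; for $u/v$ close enough, the entire function $z\mapsto u(z)v(a)-v(z)u(a)$ vanishes at $a$ and, by the same perturbation argument as in Theorem~\ref{t.BSC}, has zero locus omitting $K$. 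This exhibits $a\notin H(K)=\mathcal{SM}\text{-hull}(K)$, giving $\mathcal{SM}\text{-hull}(K)=K$.

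\medskip

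For the \emph{necessity} direction, I would assume $K=\mathcal{SM}\text{-hull}(K)$ and construct the neighbourhood $U$ together with the line bundle data. Since $\mathcal{SM}\text{-hull}(K)$ is compact (Proposition~\ref{p.rat}), one can choose a Stein neighbourhood basis of $K$; the point is to select one that is strongly meromorphically Runge. As in the proof of Theorem~\ref{t.BSC}, I expect to invoke the converse machinery of Boudreaux--Gupta--Shafikov~\cite{BoGuSh} (or a direct Runge-exhaustion argument) to produce arbitrarily small Stein neighbourhoods $U_\varepsilon$ whose closures are strongly meromorphically convex, hence strongly meromorphically Runge. It then remains to verify the line-bundle condition: given $a\in U\setminus K$, since $K=H(K)$ there is an entire function vanishing at $a$ but not on $K$; its zero divisor defines a line bundle on $U$ whose restriction I claim lies in the torsion subgroup of $\text{Pic}(U)$, with the evident defining section $\sigma$. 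The torsion claim is where the hull condition $K=\mathcal{SM}\text{-hull}(K)$ must be leveraged, presumably through the comparison of $H^2(U,\mathbb{Z})$ with $H^2(K,\mathbb{Z})$ after arranging $U$ to deformation retract onto $K$ (via Sard's theorem and finite generation of cohomology, exactly as in Theorem~\ref{t.BSC}).

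\medskip

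\textbf{The main obstacle} I anticipate is the necessity direction, specifically establishing that the relevant bundle restricts to a \emph{torsion} class in $\text{Pic}(U)$ for a suitably chosen $U$. In Theorem~\ref{t.BSC} this was clean because the totally real $S$ admitted a tubular neighbourhood deformation-retracting onto it, so that $H^2(U_\varepsilon,\mathbb{Z})\cong H^2(S,\mathbb{Z})$ and the Chern class could be killed after raising to a power. For a general compact $K$ there is no such canonical tubular structure, and the homotopy type of small Stein neighbourhoods of $K$ need not stabilize to that of $K$ in a controlled way; reconciling the required torsion statement with the Stein and strongly-meromorphically-Runge constraints on $U$ is the delicate step. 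I would address this by exploiting that $c_1:\text{Pic}(U)\to H^2(U,\mathbb{Z})$ is an isomorphism on Stein manifolds, reducing the triviality of $L^{\otimes k}|_U$ to a purely cohomological torsion statement, and then arguing that the hull condition forces the Chern class to vanish in $H^2(U,\mathbb{R})$ for $U$ small enough.
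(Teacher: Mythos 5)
Your sufficiency direction is correct and is essentially the paper's own argument: torsion gives $\sigma^{\otimes k}\in\mathcal{O}(U)$ with zero set $\sigma^{-1}(0)$, the strongly meromorphically Runge property of $U$ lets you approximate it normally by elements $u/v\in\mathcal{SM}(X)$ via Theorem~\ref{t.approx}, and the entire function $z\mapsto u(z)v(a)-v(z)u(a)$ then passes through $a$ and misses $K$, showing $a\notin H(K)=\mathcal{SM}\text{-hull}(K)$ by Proposition~\ref{p.smero}.

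The gap is in the necessity direction, which you have turned into a hard problem when it is in fact trivial; the step you flag as the ``main obstacle'' is an artifact of your approach, not of the statement. Two observations resolve it. First, the theorem only asks for \emph{some} strongly meromorphically Runge Stein neighbourhood $U$ of $K$, with no smallness requirement, and $U=X$ itself qualifies: $X$ is Stein, and the Runge condition for $\Omega=X$ just says that $\mathcal{SM}\text{-hull}(C)$ is compact in $X$ for every compact $C\subset X$, which is established right after Proposition~\ref{p.rat}. Second, once $K=\mathcal{SM}\text{-hull}(K)=H(K)$, for any $a\in X\setminus K$ there is by definition an $f\in\mathcal{O}(X)$ with $f(a)=0$ and $f^{-1}(0)\cap K=\varnothing$; the required data is then $L=$ the trivial bundle, which certainly lies in the torsion subgroup of $\text{Pic}(U)$, with section $\sigma=f|_U$. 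There is no Chern class to kill: the divisor you produce is principal on all of $X$, so its associated line bundle is trivial, not merely torsion---your plan to prove torsionness ``by leveraging the hull condition through comparison of $H^2(U,\mathbb{Z})$ with $H^2(K,\mathbb{Z})$'' is chasing a statement that holds for free. This is exactly the paper's one-line converse (``the trivial line bundle will then satisfy the hypotheses'').

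Moreover, the specific machinery you propose for building small neighbourhoods would genuinely fail for an arbitrary compact: in Theorem~\ref{t.BSC} the theorem of \cite{BoGuSh} is applied to the square-distance function $\text{dist}^2(\cdot,S)$, which is strictly plurisubharmonic near $S$ only because $S$ is totally real, and the deformation-retraction/cohomology comparison likewise uses the tubular neighbourhood structure. A general meromorphically convex compact $K$ has neither, as you yourself observe. So had the theorem actually required a small Runge neighbourhood, your outline would be stuck; it goes through only because the statement permits the large (even global) choice of $U$.
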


\begin{proof}
	Suppose that $K$ admits such a neighbourhood. Since $U$ is strongly meromorphically Stein, we at least have $\mathcal{SM}\text{-hull} (K)\subset U$ and so it suffices to show that for every $a\in U\setminus K$ there exists a $f\in\mathcal{O}(X)$ with $f(a)=0$ but $f^{-1}(0)\cap K$. Given $a\in U\setminus K$, by assumption we know there is a line bundle $L$ in the torsion subgroup of $\text{Pic}(U)$ and a section $\sigma\in\Gamma(U,L)$ such that $\sigma(a)=0$ and $\sigma^{-1}(0)\cap K=\varnothing$. This means there is an integer $k$ such that $\sigma^k\in\mathcal{O}(U)$ that can then be approximated normally on $U$ by members of $\mathcal{SM}(X)$ with poles outside of some large compact in $U$. We can now proceed as in the proof of the previous theorem: choosing a $u/v\in\mathcal{SM}(X)$ that approximates $\sigma^k$ close enough on a neighbourhood of the compact $K\cup\{a\}$, we see that $z\mapsto u(z)-v(z)\tfrac{u(a)}{v(a)}$ is a member of $\mathcal{O}(X)$ with the zero set that passes through $a$ but avoids $K$.

	The converse is trivial: If $K$ is strongly meromorphically convex, then every neighbourhood of $K$ has this property, since the trivial line bundle will then satisfy the hypotheses. So choose $U$ to be any (possibly large) strongly meromorphically Runge neighbourhood of $K$.
\end{proof}


\section{Further Generalizations}

Upon noticing that every hypersurface of $X$ can be realized as the zero set of a global holomorphic section of some holomorphic line bundle $L\to X$, one might be drawn to consider a notion of convexity with respect to global holomorphic sections of the \textit{fixed} line bundle $L$. However, if the zero set of a section $s\in\Gamma(X,L)$ avoids a compact $K\subset X$, then so does the zero set of the section $s^M\in\Gamma(X,L^{\otimes M})$ for any positive integer $M$. Therefore, a more appropriate notion of convexity of this type is to consider convexity with respect to the subgroup $\langle L\rangle\leq\text{Pic}(X)$ generated by $L$. Here we use the notation $``G_1\leq G_2$'' to indicate $G_1$ is a sub(semi)group of $G_2$. Recall that a semigroup $G$ is nonempty set equipped with an associative binary operation.

In this vein, Abe~\cite{Ab} considered more generally convexity with respect to subsemigroups of $\text{Pic}(X)$, defined as follows.
\begin{defn}\label{G-cvx}
Let $X$ be a Stein manifold containing a compact $K$ and $G$ be a subsemigroup of $\text{Pic}(X)$. Define
\[
	G\text{-hull}(K)=\left\{x\in X\,:\, s^{-1}(0)\cap K\neq\varnothing\text{ for every }L\in G\text{ and }s\in\Gamma(X,L)\text{ satisfying }s(x)=0\right\}.
\]
	We call $K$ \textit{meromorphically convex with respect to $G$}, or simply, \textit{$G$-meromorphically convex}, if $G\text{-hull}(K)=K$.
\end{defn}
It is clear that $G_2\text{-hull}(K)\subseteq G_1\text{-hull}(K)$ whenever $G_1\leq G_2$. Furthermore, it is known that $G\text{-hull}(K)$ is compact in $X$ for any $G\le\text{Pic}(X)$, see~\cite[Prop 4.1 and Cor 4.5]{Ab}. It is immediate from the definitions that $\langle 1\rangle\text{-hull}(K)=H(K)$ and $\text{Pic}(X)\text{-hull}(K)=h(K)$, where $1\in\text{Pic}(X)$ denotes the trivial line bundle.

Convexity with respect to a subsemigroup $G\leq\text{Pic}(X)$ is of interest in view of the following generalization of the Oka--Weil Theorem~\cite[Thm 5.1]{Ab}: \textit{If $K$ is a compact subset of a Stein manifold with $G\textit{-hull}(K)=K$, then for every $f\in\mathcal{O}(K)$ and $\varepsilon>0$ there exist $L\in G$ and $s_1,s_2\in\Gamma(X,L)$ such that $\|f-s_1/s_2\|_K<\varepsilon$.} However, this statement alone undervalues Abe's work. Indeed, a compact $K$ with $G\text{-hull}(K)=K$ is in particular meromorphically convex, so as mentioned above~\cite[Thm 2]{Hi} any $f\in\mathcal{O}(K)$ can be approximated uniformly on $K$ by members of $\mathcal{M}(X)$ with poles off $K$. Furthermore, no control over the $L\in G$ from which the approximating meromorphic functions $s_1/s_2$ are built is given, so one is left to wonder why approximation by quotients of sections of $L$ would be preferred over approximation by quotients of holomorphic functions (which are sections of the trivial bundle).

For a given $L\in\text{Pic}(X)$, we say that two sections $s_1,s_2\in\Gamma(X,L)$ are \textit{coprime} if their zero loci share no irreducible components. With this in mind, we give the following strengthening of  Abe's result.

\begin{theorem}\label{t.BSa}
	Let $X$ be a Stein manifold and $G$ a subsemigroup of $\text{Pic}(X)$. Let $K$ be a compact set of $X$ such that $G\text{-hull}(K)=K$. Then for every $f\in\mathcal{O}(K)$ and for every $\varepsilon>0$ there exist $L\in G$ and coprime $s_1,s_2\in\Gamma(X,L)$ such that $\|f-s_1/s_2\|_K<\varepsilon$.
\end{theorem}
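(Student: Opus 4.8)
The plan is to start from Abe's approximation theorem and then perturb a single section so as to eliminate any common components of the two zero divisors, all the while keeping the line bundle---and hence membership in $G$---unchanged. First I would invoke \cite[Thm 5.1]{Ab}, quoted above, to produce a bundle $L\in G$ and sections $s_1,s_2\in\Gamma(X,L)$ with $\|f-s_1/s_2\|_K<\varepsilon/2$. Finiteness of this norm forces $s_2$ to be zero-free on $K$ (otherwise $s_1/s_2$ would have a pole on $K$), so in particular $s_2\not\equiv 0$ and the quotient is holomorphic on a neighbourhood of $K$. The only defect of the pair $(s_1,s_2)$ is that their zero loci may share irreducible components; since $L$ will not change in what follows, correcting this defect keeps us inside $\Gamma(X,L)$ with the same $L\in G$.

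The key observation is that it is enough to replace $s_1$ by a nearby section $\widetilde{s}_1\in\Gamma(X,L)$ that does not vanish identically on any irreducible component of $Z(s_2):=s_2^{-1}(0)$. Indeed, every component of $Z(s_2)$ and of $Z(\widetilde{s}_1)$ is a hypersurface, so a shared component would be some component $W$ of $Z(s_2)$ satisfying $W\subseteq Z(\widetilde{s}_1)$, i.e.\ $\widetilde{s}_1|_W\equiv 0$; ruling this out makes $\widetilde{s}_1$ and $s_2$ coprime in the sense defined above. I would produce such a $\widetilde{s}_1$ by a Baire category argument in the Fr\'echet space $\Gamma(X,L)$, equipped with its topology of uniform convergence on compacts. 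Let $\{W_j\}_j$ be the irreducible components of $Z(s_2)$; there are at most countably many, as $X$ is second countable and the components are locally finite. For each $j$ put $V_j=\{s\in\Gamma(X,L):s|_{W_j}\equiv 0\}$, a closed linear subspace. By Cartan's Theorem A the bundle $L$ is globally generated, so choosing any point of $W_j$ yields a global section of $L$ nonvanishing there; hence $V_j$ is a \emph{proper} closed subspace and therefore nowhere dense. By Baire's theorem $\bigcup_j V_j$ is meager, so its complement is dense, and I may select $\widetilde{s}_1\notin\bigcup_j V_j$ as close to $s_1$ as desired.

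It remains to verify that the approximation survives the replacement. Fix a Hermitian metric on $L$ and write $|\cdot|_h$ for the induced fibrewise norm. For any sections $s,\widetilde{s}$ of $L$ the metric factors cancel in the quotient, giving, pointwise off the zeros of $s_2$,
\[
\left|\frac{s}{s_2}-\frac{\widetilde{s}}{s_2}\right|=\frac{|s-\widetilde{s}|_h}{|s_2|_h}.
\]
Since $s_2$ is zero-free on the compact $K$ we have $|s_2|_h\ge c>0$ there, whence $\|\widetilde{s}_1/s_2-s_1/s_2\|_K\le c^{-1}\sup_K|\widetilde{s}_1-s_1|_h$. As $\sup_K|\cdot|_h$ is one of the seminorms defining the topology of $\Gamma(X,L)$, taking $\widetilde{s}_1$ close enough to $s_1$ makes the right-hand side smaller than $\varepsilon/2$. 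Combined with the estimate from \cite[Thm 5.1]{Ab} this yields $\|f-\widetilde{s}_1/s_2\|_K<\varepsilon$ with $\widetilde{s}_1,s_2\in\Gamma(X,L)$ coprime and $L\in G$, as required.

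The genuine content of the argument, and the step I expect to be the main obstacle, is the Baire step: one must know that the ``bad'' sections form only a \emph{countable} union of \emph{proper} closed subspaces. Both facts are soft consequences of the Stein hypothesis---properness of each $V_j$ from global generation of $L$ (Theorem A), and countability of the exceptional family from second countability of $X$ together with local finiteness of the components of $Z(s_2)$. Everything else is the routine continuity of the quotient $s_1/s_2$ in its numerator on the zero-free locus of $s_2$.
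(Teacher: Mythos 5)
Your argument has one genuine gap, and it sits exactly at the point the theorem is about. You claim that finiteness of $\|f-s_1/s_2\|_K$ forces $s_2$ to be zero-free on $K$, ``otherwise $s_1/s_2$ would have a pole on $K$.'' This inference is invalid precisely because the sections produced by Abe's theorem may share zero divisors --- the defect your proof is supposed to remove. If $\mathrm{div}(s_1)\geq\mathrm{div}(s_2)$ near a point of $K$ where $s_2$ vanishes, the quotient extends holomorphically there and the norm stays finite. Concretely, take $X=\mathbb{C}$, $L$ trivial, $K=\{0\}$, $f\equiv 0$, $s_1=z^2$, $s_2=z$: the conclusion of Abe's theorem \emph{as quoted in the paper} is satisfied exactly ($\|f-s_1/s_2\|_K=0$), yet $s_2$ vanishes on $K$. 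This matters for everything that follows: without a positive lower bound for $|s_2|_h$ on $K$ your final estimate collapses, and worse, a generic perturbation of the numerator (here $\widetilde s_1=z^2+\delta$, giving $\widetilde s_1/s_2=z+\delta/z$) creates an honest pole on $K$ and destroys the approximation entirely. So the zero-freeness of the denominator on $K$ cannot be derived from the size of the norm; it has to be part of the input.

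The gap is closable, but only by strengthening the citation: Abe's Theorem 5.1 in its original form (like Hirschowitz's theorem and Theorem~\ref{t.approx}, whose proofs construct the denominator as a power of a section whose zero locus avoids $K$) asserts approximation by quotients $s_1/s_2$ with $s_2^{-1}(0)\cap K=\varnothing$; the paper's one-line quotation merely suppresses this clause. Granting that statement, the rest of your proof is correct: the irreducible components of $s_2^{-1}(0)$ form a locally finite, hence countable, family; each $V_j$ is a closed linear subspace of the Fr\'echet space $\Gamma(X,L)$, proper by Cartan's Theorem A, hence nowhere dense; Baire gives a coprime $\widetilde s_1$ arbitrarily close to $s_1$; and the metric cancellation estimate is sound once $\inf_K|s_2|_h>0$. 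Note that this would then be a genuinely different route from the paper's, which does not use Abe as a black box at all: it redoes the construction (a product section $s$ with $\widehat K_{X\setminus s^{-1}(0)}\subset U$, an embedding of $X\setminus s^{-1}(0)$ built from generators of the coherent sheaf $(1/s)\cdot\mathcal{O}$, Oka--Cartan extension and a Laurent-type expansion producing $\sigma/s^N$), and achieves coprimality by perturbing the lowest coefficient $a_{-N}$ --- morally the same genericity idea as your Baire step, but implemented inside the construction, which is what lets the paper control the denominator on $K$ by fiat rather than by citation.
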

\noindent\textbf{Remarks.} (i) Observe that meromorphic functions of the form $s_1/s_2$ for coprime $s_1,s_2\in\Gamma(X,L)$ are natural generalizations of strong meromorphic functions to sections of a line bundle. Furthermore, note that every $m\in\mathcal{M}(X)$ can be written in strong form with respect to some line bundle. Indeed, the zero divisor of $m$ can be realized as $\text{div}(s)$ for some global section $s$ of some $L\in\text{Pic}(X)$. Write $s\cong\{s_i\}_{i\in I}$, where $\{U_i\}_{i\in I}$ is an open cover of $X$ by trivializations of $L$ with associated transition functions $g_{ij}\in\mathcal{O}^*(U_i\cap U_j)$. Then we have
\[
	\frac{s_i}{m|_{U_i}}=g_{ij}\frac{s_j}{m|_{U_j}}
\]
on $U_{i}\cap U_j$, and it follows that $\{s_i/m|_{U_i}\}_{i\in I}$ patches together to form a global section of $L$. Hence $m=s/(s/m)$ is a strong representation of $m$ in terms of sections of $L$.

(ii) Since (weak) meromorphic convexity is equivalent to convexity with respect to the entire Picard group, a consequence of Theorem~\ref{t.BSa} is the following:\textit{ Any holomorphic function defined in a neighbourhood of a meromorphically convex compact $K\subset X$ is the uniform limit on $K$ of a sequence of meromorphic functions having the form $s_1/s_2$ for \textup{coprime} $s_1,s_2\in\Gamma(X,L)$, where $L\in\text{Pic}(X)$ only depends on $f$.} This may have utility in situations where it is more desirable that the meromorphic functions by which one wishes to approximate be quotients of objects which are coprime rather than be quotients of functions themselves.

\begin{proof}[Proof of Theorem~\ref{t.BSa}]
	Let $U$ be an open subset of $K$ on which $f$ is defined. Define the compact set $M:=\widehat{K}_X\setminus U$, where $\widehat{K}_X$ denotes the holomorphically convex hull of $K$ in $X$. As in the proof of Theorem~\ref{t.approx}, for any $s\in\Gamma(X,L)$, $L\in G$, with zero locus avoiding $K$ the set $X\setminus s^{-1}(0)$ is Stein; in particular, $\widehat{K}_{X\setminus s^{-1}(0)}\subset X\setminus s^{-1}(0)$. The set $M_s:=M\cap\widehat{K}_{X\setminus s^{-1}(0)}$ is compact, and so $\bigcap_{s}M_s=\varnothing$, where the intersection is taken over all such $s$, implies there are finitely many $s_j\in\Gamma(X,L_j)$, $L_j\in G$, $j=1,\ldots,k$, with
\[
	M\cap\widehat{K}_{X\setminus s_1^{-1}(0)}\cap\cdots\cap\widehat{K}_{X\setminus s_k^{-1}(0)}=\varnothing.
\]
	Consequently, the section $s:=\prod_{j=1}^{k}s_j$ is a global holomorphic section of $L_1\otimes\ldots\otimes L_k\in G$ which has $\widehat{K}_{X\setminus s^{-1}(0)}\subset U$ and hence $f$ may be approximated uniformly on $\widehat{K}_{X\setminus H}$ by members of $\mathcal{O}(X\setminus s^{-1}(0))$.

	To complete the proof, it suffices to show the following: Let $X$ be a Stein manifold and $s\in\Gamma(X,L)$ for some holomorphic line bundle $L$. Then every $f\in\mathcal{O}(X\setminus s^{-1}(0))$ can be approximated normally by meromorphic functions of the form $\sigma/s^N$, where the zero set of $\sigma\in\Gamma(X,L^{\otimes N})$ shares no irreducible components with $s^{-1}(0)$. Define $\mathcal{J}=(1/s)\cdot\mathcal{O}$, where $\mathcal{O}$ denotes the sheaf of germs of holomorphic functions on $X$. Then $\mathcal{J}$ is a coherent subsheaf of germs of meromorphic functions on $X$~\cite[pg. 119]{GrRe}, and so by Cartan's theorem A there exist global sections $h_1,\ldots,h_k$ of the sheaf $\mathcal{J}$ with the property that the germs $(h_1)_z,\ldots,(h_k)_z$ generate $\mathcal{J}_z$ as an $\mathcal{O}_z$-module for every $z\in X$. Given a proper holomorphic embedding $\Phi:X\to\mathbb{C}^N$, we define $\Psi=(\Phi,h_1,\ldots,h_k)$. Observe that $\Psi$ is a proper holomorphic embedding of $X\setminus s^{-1}(0)$ into $\mathbb{C}^{N+k}$. Indeed, if $p\in s^{-1}(0)$, then $(\frac{1}{s})_p=(g_1)_p(h_1)_p+\ldots+{(g_k)}_p{(h_k)}_p$ for some ${(g_1)}_p,\ldots,{(g_k)}_p\in\mathcal{O}_p$. Since $1/s\to\infty$ along any sequence in $X\setminus s^{-1}(0)$ tending towards $p$, the same must be true for at least one of the $h_j$.

	Now, the Oka--Cartan theorem yields a function $F\in\mathcal{O}(\mathbb{C}^{N+k})$ which agrees with $f\circ\Psi^{-1}$ when restricted to the complex-analytic set $\Psi(X\setminus s^{-1}(0))\subset\mathbb{C}^{N+k}$. Let $F_T$ be a Taylor polynomial of $F$ and consider $F_T\circ\Psi$. Let $\{U_i\}$ be an open cover of $X$ by trivializations of $L$ with associated transition functions $g_{ij}$. Since the $h_1,\ldots,h_k$ are in particular sections of $\mathcal{J}$, we have $h_\nu|_{U_i}=t_{i\nu}/s_i$ in $U_i$, $t_{i\nu}\in\mathcal{O}(U_i)$, for each $\nu$ as well; consequently $F_T\circ\Psi|_{U_i}$ is a polynomial in $t_{i1}/s_i,t_{i2}/s_i,\ldots,t_{ik}/s_i$ and the components of $\Phi$ restricted to $U_i$. Putting everything under one denominator shows $F_T\circ\Psi|_{U_i}$ to be of the form
\[
	\frac{a_{-N}(t_i,\Phi)+a_{-N+1}(t_i,\Phi)s_i+\ldots+a_{N}(t_i,\Phi)s_i^{2N}}{s_i^N},
\]
	where $a_{-N},\ldots,a_{N}$ are polynomials and $t_i$ denotes $(t_{i1},\ldots,t_{ik})$. Note that $N$ is independent of the choice of $i$. Write $\sigma_i$ for the numerator of the above expression. We have $\sigma_i=(s_i/s_j)^N\sigma_j=g_{ij}^N\sigma_j$ on $U_i\cap U_j$ for every $i,j$, so $\sigma=\{\sigma_i\}$ patches together to a section of $L^{\otimes N}$. Similarly to Theorem~\ref{t.approx}, $\sigma$ and $s$ can be assumed coprime after possibly a small perturbation of $a_{-N}$.
\end{proof}


\end{document}